\newcommand{\p}[1]{\medskip\noindent\textit{#1}.}
\title{Geodesic-preserving bijections of the Thurston geometries}
\author{Ryan Dickmann}
\address{Vanderbilt University}
\email{ryan.dickmann@vanderbilt.edu} 
\author{Palani Lideros} 
\address{Ohio State University}
\email{lideros.1@osu.edu}
\author{Akash Narayanan}
\address{University of Notre Dame}
\email{anaraya2@nd.edu}
\begin{document}

\begin{abstract}
    We completely classify the bijections of the Thurston geometries that preserve geodesics as sets. For Riemannian manifolds that satisfy a certain technical condition, we prove that a totally geodesic subset is a submanifold. We also classify the geodesic-preserving bijections of the Euclidean cylinder $\Sp^1 \times \R$ and the bijections of the hyperbolic plane $\Hyp^2$ that preserve constant curvature curves.
\end{abstract}
\maketitle
\section{Introduction}
Given a complete Riemannian manifold $M$, we say a bijection $f: M \rightarrow M$ is \textit{geodesic-preserving} if the image of each geodesic is a geodesic as a set. Throughout the paper, a geodesic refers to the image of a locally isometric immersion of the real line into a Riemannian manifold; in other words, we always assume that geodesics are complete. Note that $f$ is not assumed to be continuous.

In his paper ``Lost Theorems of Geometry" \cite{Jeffers}, Jeffers classifies the geodesic-preserving bijections for Euclidean spaces $\E^n$, spherical spaces $\Sp^n$, and hyperbolic spaces $\Hyp^n$ for all $n$. This completely classifies geodesic-preserving bijections for the possible two-dimensional geometries given by the uniformization theorem. 

In three dimensions, there are eight geometries, referred to as the Thurston geometries due to Thurston's work classifying them. By Jeffers, geodesic-preserving bijections are understood for three of these geometries, $\Hyp^3, \E^3,$ and $\Sp^3$, so we investigate the remaining five geometries:
\[
\begin{array}{c}
\HxR \quad\quad \SxR \quad\quad \sltilde \quad\quad \Nil \quad\quad \Sol
\end{array}
\]

An isometry is always geodesic-preserving; in addition, for $\HxR$ and $\SxR$, any map that acts as the identity on the first coordinate and an affine map in the $\R$-coordinate is geodesic-preserving. We classify the geodesic-preserving bijections of these five Thurston geometries and show that in each case these basic examples generate the entire group of geodesic-preserving bijections. 

  \begin{theorema} \label{thm}
  Let $f$ be a geodesic-preserving bijection of $\Hyp^2 \times \R$ or $\Sp^2 \times \R$. Then $f$ is an isometry composed with an affine map in the $\R$-coordinate. Let $f$ be a geodesic-preserving bijection of $\sltilde$, $\Nil$, or $\Sol$. Then $f$ is an isometry.
 \end{theorema}

Here $\sltilde$ denotes the universal cover of the Lie group $\slg$. We equip $\slg$ with the left-invariant Riemannian metric with respect to the Lie group structure, and we equip $\sltilde$ with the pullback metric.

$\Nil$ geometry is defined by the left-invariant Riemannian metric on the Heisenberg group, which has the  following group operation on $\R^3$: \[(x,y,z) \cdot (a,b,c) = (x+a,y+b,z+c+xb)\]

$\Sol$ geometry is defined similarly using the Lie group given by the following operation on $\R^3$: \[(x, y, z) \cdot (a, b, c) = (e^{-z}a + x, e^{z}b + y, c + z)\]

 \subsection*{Auxiliary results} The proof of Theorem \hyperref[thm]{A} relies on the following results.
 
  \p{Totally geodesic subsets} A \textit{totally geodesic} subset $X$ is a nonempty subset such that for any $p,q \in X$ there exists a geodesic between $p$ and $q$ that lies entirely in $X$. We say a totally geodesic subset is \textit{trivial} when it is either a geodesic or the entire space. Totally geodesic subsets are particularly useful since a geodesic-preserving bijection restricts to a geodesic-preserving bijection between totally geodesic subsets. 

Note a totally geodesic subset is not a priori given any topological structure, such as an embedded or immersed submanifold, similar to how a geodesic-preserving bijection is not assumed to be continuous. We make use of the following result for Riemannian manifolds where geodesics never return too close to themselves, and we prove it in the next section.
 
  \begin{proposition}[Totally geodesic subsets are submanifolds] \label{prop: fundlemma} 
    Let $M$ be a Riemannian manifold with the property that there is an $\varepsilon > 0$ such that the intersection of a geodesic and an open $\varepsilon$-ball is either empty or a single geodesic segment. Then any totally geodesic subset is an embedded submanifold.
  \end{proposition}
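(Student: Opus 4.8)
The plan is to work locally and show that near each point $p \in X$ the set $X$ is the image under $\exp_p$ of a linear subspace of $T_pM$; since this exhibits $X$ as a totally geodesic slice in suitable coordinates, it follows that $X$ is an embedded submanifold. Throughout I fix $p \in X$ and shrink the given $\varepsilon$ if necessary so that, in addition to the single-segment hypothesis, $B(p,\varepsilon)$ is strongly geodesically convex and $\exp_p$ restricts to a diffeomorphism from $B(0,\varepsilon) \subseteq T_pM$ onto $B(p,\varepsilon)$.

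First I would establish the local cone structure. Given $q \in X \cap B(p,\varepsilon)$, total geodesicity provides a complete geodesic $\eta \subseteq X$ through $p$ and $q$. By hypothesis $\eta \cap B(p,\varepsilon)$ is a single geodesic segment $\sigma$, and since $\sigma$ is a geodesic arc through $p$ contained in the ball it must be radial, namely $\sigma = \{\exp_p(tv) : |t| < \varepsilon\}$ where $v = \exp_p^{-1}(q)/|\exp_p^{-1}(q)|$; strong convexity guarantees the sub-arc from $p$ to $q$ is the unique minimizing radial geodesic, so this identification is forced. As $\sigma \subseteq X$, the entire diameter in direction $v$ lies in $X$. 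Writing $C_p$ for the set of directions so obtained, this shows simultaneously that $C_p$ is symmetric ($-v \in C_p$ whenever $v \in C_p$) and that $\exp_p^{-1}(X \cap B(p,\varepsilon)) = C_p \cap B(0,\varepsilon)$; that is, $C_p$ is a cone and $X$ is locally the exponential image of this cone.

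It then remains to prove that $C_p$ is a linear subspace, and this splits into two parts. The geometric input from total geodesicity is that $S := C_p \cap S_p$ is \emph{spherically convex}: for $d_1, d_2 \in S$ and small $s > 0$ the points $\exp_p(sd_1), \exp_p(sd_2)$ lie in $X$, so the minimizing geodesic between them lies in $X$, and the directions at $p$ of its points converge, as $s \to 0$, to the minor great-circle arc from $d_1$ to $d_2$ (a first-order computation in normal coordinates, where the metric is Euclidean to leading order). Once one knows that $S$ is closed, this forces the whole minor arc to lie in $S$; a closed, symmetric, arc-convex subset of the round sphere contains the full great circle through any two of its points and is therefore a great subsphere $S_p \cap V$, whence positive scaling and symmetry give $C_p = V$. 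Thus $X \cap B(p,\varepsilon) = \exp_p(V \cap B(0,\varepsilon))$ is a totally geodesic slice; comparing overlapping charts shows $\dim V$ is locally constant, and because each chart realizes $X$ as an honest linear slice the subspace topology agrees with the ambient one, so $X$ is embedded.

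The main obstacle is the closedness of $S = C_p \cap S_p$ (equivalently, that $X$ is locally closed), and this is exactly where the single-segment hypothesis must be used in an essential way rather than merely to identify radial chords. Without it the conclusion is false: a dense geodesic on a flat torus is totally geodesic, yet its tangent cone is a dense, non-closed set of directions, and correspondingly the geodesic is immersed but not embedded, consistent with the fact that such a geodesic meets small balls in infinitely many segments. The difficulty is that total geodesicity only ever produces geodesics through pairs of points already in $X$, so a naive limit of directions $w_n \in C_p$ with $w_n \to w$ cannot be placed in $C_p$ without first knowing $X$ contains the limiting points. The plan is therefore to rule out such accumulation directly: if distinct $w_n \in C_p$ accumulated at some $w \notin C_p$, the corresponding diameters $\Gamma_n \subseteq X$ would accumulate on the geodesic $\Gamma = \exp_p(\mathbb{R}w)$, and by joining suitable points of the $\Gamma_n$ via total geodesicity one aims to produce a single geodesic of $M$ forced to meet a small ball about a point of $\Gamma$ in more than one segment, contradicting the hypothesis. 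Making this last step precise, extracting a genuine single-geodesic violation from the accumulation, is the crux of the argument.
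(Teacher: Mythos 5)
Your cone-structure step is correct, and it matches the starting point of the paper's argument: the single-segment hypothesis forces every geodesic of $X$ through $p$ to meet the ball in a radial diameter, so $\exp_p^{-1}(X)$ is locally a symmetric cone $C_p$. But the proof is incomplete exactly where you say it is: the closedness of $S = C_p \cap S_p$ is never established, and without it your great-subsphere argument shows only that the \emph{closure} of $S$ is a great subsphere, i.e., that $X$ is locally contained in the exponential image of a linear subspace, possibly as a proper (even dense) subset --- which does not make it a submanifold. Worse, the strategy you sketch for the missing step is misdirected. If directions $w_n \in C_p$ accumulate at some $w \notin C_p$, no violation of the single-segment hypothesis can be extracted: each diameter $\Gamma_n$, and each geodesic of $X$ joining a point of $\Gamma_n$ to a point of $\Gamma_m$, individually meets every small ball in a single segment, and the hypothesis constrains only individual geodesics, never families of them. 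The hypothesis is an assumption on $M$ and cannot be contradicted; what accumulation actually produces is not a contradiction but \emph{more points of $X$}: by total geodesicity plus the hypothesis, the short geodesics joining points of $\Gamma_n$ to points of $\Gamma_m$ lie in $X$, so $X$ contains the whole two-dimensional ``fan'' swept out between the two diameters.

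That observation is how the paper's proof proceeds, and it sidesteps closedness altogether. The paper runs an induction on dimension: given a $(j-1)$-ball of $X$ through $x$ (the exponential image of a $(j-1)$-dimensional piece of the cone) and a geodesic of $X$ through $x$ meeting it only at $x$, pick a nearby point $y$ on that geodesic; the hypothesis guarantees a unique short geodesic from $y$ to each point of the ball, total geodesicity puts these geodesics in $X$, and their union is a $j$-dimensional fan in $X$, from which one gets a $j$-ball of $X$ through $x$ and hence a $j$-dimensional piece of the cone. Limit directions are thus absorbed by open sets built at a fixed finite scale rather than approached by sequences. Note that you already hold the key mechanism --- your spherical-convexity step uses precisely the fact that short geodesics between nearby points of $X$ lie in $X$ --- but you exploit it only infinitesimally, in the limit $s \to 0$, and that is exactly what creates the closedness obstacle. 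Using it at a fixed small scale to fill in fans, as the paper does, turns your limiting great-circle arc into an honest subset of $X$ and closes the gap you identified.
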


 \p{Cylinder case} For the proof of the $\Sp^2 \times \R$ case, we first address the case of the Euclidean cylinder $\Sp^1 \times \R$ since cylinders appear as totally geodesic subsurfaces in $\Sp^2 \times \R$. We classify the geodesic-preserving bijections of the cylinder in Theorem~\ref{prop: cylinder2}. 
 
 \p{Constant curvature curves} For the proof of the $\sltilde$ case, we classify the bijections of $\Hyp^2$ that preserve constant curvature curves in Proposition~\ref{constantcurvature}. This is useful to us due to the connection between $\sltilde$ and $\Hyp^2$. In particular, $\sltilde$ is a line bundle over $\Hyp^2$, and certain geodesics project to curves of constant curvature.

  \subsection*{Outline} The proof of Theorem \hyperref[thm]{A} is split into multiple sections with each of the five geometries given its own section. We outline the main ideas used in each case. We prove Proposition~\ref{prop: fundlemma} in Section~\ref{section:proof}. This condition holds for all five considered Thurston geometries except $\Sp^2 \times \R$, and it is used during the proofs of the $\Hyp^2 \times \R$ and $\Sol$ cases since these have interesting totally geodesic subsets. The $\sltilde$ and $\Nil$ geometries are known to have no totally geodesic submanifolds and thus no totally geodesic subsets.

 \p{$\HxR$ case} We prove this case in Section~\ref{sec: hyp} with Theorem~\ref{hyp}. We give an elementary proof for the classification of the totally geodesic submanifolds, then use Proposition~\ref{prop: fundlemma} to classify the totally geodesic subsets. The nontrivial totally geodesic subsets are exactly the horizontal planes (isometric to $\Hyp^2$) and the vertical planes (isometric to $\E^2$). We show these are each preserved by a geodesic-preserving bijection, then combine with the known classification of geodesic-preserving bijections for $\Hyp^2$ and $\E^2$ by Jeffers. 

  \p{$\SxR$ case} In Section~\ref{sec: sphere}, we first prove the cylinder case with Theorem~\ref{prop: cylinder2}, and then the $\SxR$ case with Theorem~\ref{sphere}. The nontrivial totally geodesic submanifolds of $\SxR$ are horizontal spheres (isometric to $\Sp^2$) and vertical cylinders (isometric to $\Sp^1 \times \R$). We directly show spheres and cylinders are preserved by a geodesic-preserving bijection by using properties of the geodesics in $\SxR$. We then follow a similar method to the $\Hyp^2 \times \R$ case and appeal to the known classification of geodesic-preserving bijections for $\Sp^2$ and the cylinder case.
  
  \p{$\sltilde$ case} We prove this case in Section~\ref{sec: sltilde} with Theorem~\ref{sl}. Our proof uses that this geometry is a line bundle over $\Hyp^2$ and the projection of a geodesic to $\Hyp^2$ is either a point or a constant curvature curve. We show that a geodesic-preserving map of $\sltilde$ preserves this line bundle and gives a well-defined bijection on $\Hyp^2$ that preserves the set of constant curvature curves, and then we appeal to Proposition~\ref{constantcurvature} which shows a bijection preserving constant curvature curves is an isometry.

  \p{$\Nil$ case} We prove this case in Section~\ref{sec: nil} with Theorem~\ref{nil}. The proof is similar to the $\sltilde$ case using the fact that $\Nil$ is a line bundle over $\E^2$ and the projection of a geodesic is either a point or a constant curvature curve (circles or lines). We show that a geodesic-preserving bijection $f$ of $\Nil$ gives a well-defined bijection $f_\star$ on $\E^2$, and then show directly $f_\star$ takes lines to lines. We then appeal to the classification of geodesic-preserving bijections of $\E^2$.

    \p{$\Sol$ case} We prove this case in Section~\ref{sec: sol} with Theorem~\ref{sol}. Through each point in $\Sol$ there are two orthogonal hyperbolic planes, and these are exactly the nontrivial totally geodesic subsurfaces of $\Sol$. We assume this as fact, and use this with Proposition~\ref{prop: fundlemma} to classify the totally geodesic subsets. Then a geodesic-preserving bijection of $\Sol$ takes hyperbolic planes to hyperbolic planes, and we appeal to the classification of geodesic-preserving bijections of $\Hyp^2$.

  \subsection*{Related problems}

  We pose some questions and discuss related work in the area. We first wonder if the condition in Proposition~\ref{prop: fundlemma} can be removed. 
  
 \begin{question} \label{question}
 For a general Riemannian manifold is a totally geodesic subset always an immersed submanifold?
 \end{question}
 
 \begin{remarknum}
     It may be useful for Question~\ref{question} to use the fact that any two geodesics in a Riemannian manifold either agree or intersect a countable number of times. We could not find this fact stated in the literature, but it is an exercise using the Riemannian exponential map and separability of the manifold. 

     The Baire Category Theorem may also be useful to show a totally geodesic subset contains a ``fan" of geodesics dense in an open neighborhood of the space of geodesics through a point. Compare with the proof of Proposition~\ref{prop: fundlemma} in the next section.
 \end{remarknum}

Totally geodesic subsets that are not a priori submanifolds were studied for complex hyperbolic space \cite{botos2024}. Although Proposition~\ref{prop: fundlemma} lets us classify totally geodesic subsets for some of the Thurston geometries, it may be possible to directly classify totally geodesic subsets for other Riemannian manifolds where this proposition does not apply.

 \begin{problem}
     Classify totally geodesic subsets for other Riemannian manifolds.
 \end{problem}


  There are countless variations to the geodesic-preserving bijection problem. Recently, there has been progress classifying geodesic-preserving bijections for compact manifolds. For example, see \cite{ShulkinVanLimbeek2017}, \cite{Yang2021}, \cite{Yang2025}. Notably, the case of compact hyperbolic surfaces without boundary is still open. We pose the following similar problem.

  \begin{problem}
  Classify geodesic-preserving bijections for a compact quotient of a Thurston geometry.
  \end{problem}

A variation to this type of problem involves replacing geodesics with some other geometric object for which incidence axioms are able to be defined. The most classical result of this type is due to Carathéodory who showed a circle-preserving bijection between any open connected subsets of $\R^2$ is the restriction of a M\"{o}bius transformation \cite{caratheodory1937}. Lo and Sane \cite{karim2024} proved that a bijection of hyperbolic space that either preserves horocycles or preserves hypercycles is an isometry. This is similar to Theorem~\ref{prop: cylinder2} except we consider bijections that may change the type of the constant curvature curve.

Yet another variation involves relaxing some of the given conditions for any of the related problems, for example, replacing bijection with injection or surjection. See \cite{Artstein-Avidan2016}, \cite{ChubarevPinelis1999}, \cite{GibbonsWebb1979}, \cite{LiWang2016}, \cite{yao2011}. See \cite{Ozgur2014} for a detailed report on the circle-preserving case.

We say a function between Riemannian manifolds is \textit{geodesic} when the image of each geodesic is contained in a geodesic. An important example is given by the Klein model of hyperbolic space as the open unit disk in Euclidean space of the same dimension. With this example in mind, we pose the following problems.

\begin{problem}
       Show every non-constant geodesic function from $\Hyp^n$ to $\E^n$ is given by the Klein model on some open disk.
\end{problem}

\begin{problem}
Classify the non-constant geodesic functions between any of the geometries.
\end{problem}

\subsection*{Acknowledgements}

This project began during the 2023 CUBE REU. We thank Dan Margalit for suggesting the classification of geodesic-preserving bijections of the Thurston geometries and for comments on an earlier draft. Further thanks to the other mentors/organizers Sahana Balasubramanya, Wade Bloomquist, Abdoul Karim Sane, and Roberta Shapiro.

\section{Totally geodesic subsets are submanifolds} \label{section:proof}

We prove Proposition~\ref{prop: fundlemma} through an induction argument in which the condition on the manifold is used to build an open ball of dimension $n$ in the totally geodesic subset from a given open ball of dimension $n-1$ in the totally geodesic subset.
 
  \begin{proof} [Proof of Proposition~\ref{prop: fundlemma}]
  Suppose $M$ is dimension $n$. Let $X$ be a totally geodesic subset. Fix a point $x$ and consider all the geodesics in $X$ through $x$. Let $Y$ be the pullback of $X$ via the exponential map at $x$ to $\R^n$. Let $Y^\prime$ be the linear subspace of $\R^n$ spanned by the vectors of $Y$ through the origin, and let the dimension of $Y^\prime$ be $m \leq n$. It suffices to show $Y = Y^\prime$. Note it suffices to show by induction that a $j$-dimensional subspace of $Y^\prime$ is contained in $Y$ for all $j\leq m$. 
  
  The $j=1$ case is trivial. For general $j$, choose any $(j-1)$-dimensional subspace of $Y^\prime$. By considering the image of this subspace and using the induction hypothesis, $X$ contains an $(j-1)$-ball. Let $\gamma$ be a small geodesic segment through $x$ intersecting the ball only at $x$. This is possible due to the local diffeomorphism property of the exponential map. 
  
  By choosing a point $y \in \gamma \setminus\{x\}$ sufficiently close, it follows from the condition on $M$ and properties of the exponential map that there is a unique geodesic from $y$ to each point of the ball. Since $X$ is totally geodesic, these geodesics are in $X$, and it follows $X$ contains an $j$-ball centered about $x$. Using the condition on $M$ again, we see $X$ contains geodesics from $x$ to nearby points of the $j$-ball, and so the desired $j$-dimensional subspace of $Y^\prime$ is contained in $Y$.
  \end{proof}

\section{\texorpdfstring{Geodesic-preserving bijections of $\HxR$}{Geodesic-preserving bijections of H2 times R}} \label{sec: hyp}

We want to show that a geodesic-preserving bijection of $\HxR$ is an isometry composed with an affine map in the $\R$-coordinate. The isometry group is exactly $\Isom(\Hyp^2) \times \Isom(\R)$, so we may then describe the group of geodesic-preserving bijections as $\Isom(\Hyp^2) \times \Aff(\R) $.

\begin{theorem} \label{hyp}
    Let $f: \HxR \rightarrow \HxR$ be a geodesic-preserving bijection. Then $f$ is an isometry composed with an affine map in the $\R$-coordinate.
\end{theorem}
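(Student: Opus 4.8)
The plan is to reduce to Jeffers's classifications for $\Hyp^2$ and $\E^2$ by showing that $f$ respects the two natural families of totally geodesic planes in $\Hyp^2\times\R$. First I would recall that the geodesics of the product are the curves $t\mapsto(\gamma(t),\,at+b)$ with $\gamma$ a constant-speed geodesic of $\Hyp^2$, so that the horizontal planes $\Hyp^2\times\{c\}$ and the vertical planes $\ell\times\R$ (for $\ell$ a geodesic of $\Hyp^2$) are totally geodesic, and isometric to $\Hyp^2$ and $\E^2$ respectively. The first real task is to prove these are the only nontrivial totally geodesic submanifolds. I would analyze a candidate tangent $2$-plane $V$ at a point: either $V$ is horizontal (yielding a horizontal plane), or $V$ meets the horizontal distribution in a line $\R(u,0)$ and contains a vector $(v,1)$. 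When $v$ is parallel to $u$ one gets a vertical plane, and the crux is excluding $v\not\parallel u$. For this I would use that on a totally geodesic surface the ambient height function is affine along every geodesic, so its surface gradient is parallel, making the surface flat with orthogonal geodesic flow lines and level sets; tracking these shows the horizontal level sets would project to a common perpendicular of two fixed ultraparallel geodesics of $\Hyp^2$ at every height, which is impossible by uniqueness of the common perpendicular unless the flow lines are vertical, i.e.\ unless the surface is a vertical plane. Combined with Proposition~\ref{prop: fundlemma}, whose hypothesis holds here since $\Hyp^2\times\R$ is nonpositively curved, this classifies the totally geodesic subsets as geodesics, horizontal planes, vertical planes, and the whole space.

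Next I would observe that $\Hyp^2\times\R$ is a Hadamard manifold (complete, simply connected, sectional curvatures in $[-1,0]$), so any two points are joined by a unique geodesic, and hence $f^{-1}$ is also geodesic-preserving. Consequently $f$ maps totally geodesic subsets bijectively to totally geodesic subsets, preserving dimension, so it permutes the nontrivial planes. To see that it cannot interchange the two types, note that $f$ restricts to a geodesic-preserving bijection between a plane and its image, and there is no geodesic-preserving bijection between $\Hyp^2$ and $\E^2$: such a map would preserve the incidence relation ``two geodesics are disjoint,'' yet through a point off a given geodesic there are infinitely many disjoint geodesics in $\Hyp^2$ and exactly one in $\E^2$. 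Hence $f$ sends horizontal planes to horizontal planes and vertical planes to vertical planes.

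It then remains to assemble these constraints. Writing $f(p,c)=(\alpha_c(p),g(c))$, where $g$ records the induced permutation of heights and $\alpha_c$ is the restriction of $f$ to the plane at height $c$, Jeffers's theorem for $\Hyp^2$ forces each $\alpha_c\in\Isom(\Hyp^2)$. Preservation of vertical planes means $\alpha_c(\ell)$ is independent of $c$ for every geodesic $\ell$; since an isometry of $\Hyp^2$ fixing every geodesic setwise is the identity, $\alpha_c=\alpha$ is independent of $c$. Finally, restricting to a single vertical plane $\ell\times\R\cong\E^2$ and using that $\alpha|_\ell$ is an arc-length isometry, the map takes the form $(s,c)\mapsto(\pm s+s_0,\,g(c))$, which is affine (Jeffers for $\E^2$) precisely when $g$ is affine; bijectivity gives $g(c)=ac+b$ with $a\neq0$. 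Thus $f(p,c)=(\alpha(p),ac+b)$, an isometry composed with an affine map in the $\R$-coordinate.

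I expect the main obstacle to be the classification of the totally geodesic submanifolds, and specifically the exclusion of the ``tilted'' planes whose tangent plane is neither horizontal nor of the form $\R(u,0)\oplus\R(0,1)$, since this is where the geometry of $\Hyp^2$ (divergence of perpendiculars and uniqueness of common perpendiculars) enters essentially. Once both families of planes are shown to be preserved, the remainder is bookkeeping against the known $\Hyp^2$ and $\E^2$ cases.
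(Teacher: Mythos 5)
Your proposal is correct, and its overall architecture coincides with the paper's: classify the nontrivial totally geodesic subsets as horizontal and vertical planes (using Proposition~\ref{prop: fundlemma} to pass from subsets to submanifolds), rule out an exchange of the two families via the parallel postulate, and then assemble the result from Jeffers's theorems for $\Hyp^2$ and $\E^2$ --- your $\alpha_c$ and $g$ are exactly the paper's $f_\star$ and $f_\R$ written in coordinates. Where you genuinely diverge is in the classification of totally geodesic surfaces. The paper deliberately avoids differential geometry there: it intersects a totally geodesic surface $S$ with the horizontal planes, shows each intersection is a single geodesic $\gamma_r$ whose projections to $\Hyp^2$ are pairwise disjoint and monotonically ordered, and then gets a contradiction because a slant geodesic of $S$ joining points on two distinct $\gamma_r$ must meet every $\gamma_r$, while its projection (a geodesic of $\Hyp^2$) can only cross the projections $p(\gamma_r)$ for $r$ in a bounded range. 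Your argument instead runs through second-order structure: the height function is affine on $S$, so its gradient is parallel, $S$ is flat, and the tilted case is killed because all horizontal level sets would be common perpendiculars of two fixed ultraparallel geodesics of $\Hyp^2$, contradicting uniqueness of the common perpendicular. Both are sound; yours is the standard Riemannian route (and it implicitly needs $S$ to be smooth and totally geodesic in the second-fundamental-form sense, which does follow since Proposition~\ref{prop: fundlemma} exhibits $S$ locally as the exponential image of a linear subspace), while the paper's is elementary in that it quotes only the description of geodesics of the product. One further small difference in your favor: your observation that $\HxR$ is a Hadamard manifold, so geodesics between points are unique and hence $f^{-1}$ is automatically geodesic-preserving, cleanly justifies steps (dimension preservation under $f$, and running the parallel-postulate obstruction in both directions) that the paper leaves implicit.
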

\p{Geodesics} For Riemannian product metrics, a curve is a geodesic of the product space if and only if it can be parameterized so the projection to each coordinate gives a geodesic parameterized at constant speed or a point. For the $\HxR$ and $\SxR$ cases, we say a geodesic is \textit{vertical} if it is equal to $\{p\} \times \R$ for some point $p$ in the two-dimensional factor, and we say it is \textit{horizontal} if it is contained in $\Sp^2 \times \{r\}$ or $\Hyp^2 \times \{r\}$ for some $r \in \R$. There is one other type of geodesic we call a \textit{slant} geodesic such that the projection to either factor is a geodesic.

There are two types of natural totally geodesic subsets in $\HxR$ -- \textit{horizontal planes} of the form $\Hyp^2 \times \{r\}$ and \textit{vertical planes} are of the form $\gamma \times \R$ for some geodesic $\gamma \in \Hyp^2$. Note that horizontal planes are isometric to $\Hyp^2$ and vertical planes are isometric to $\E^2$. 
The totally geodesic subsurfaces of $\HxR$ are well-known to be exactly the horizontal and vertical planes.

From the above description of the geodesics of $\HxR$, Proposition~\ref{prop: fundlemma} applies, so that a totally geodesic subset of $\HxR$ is an embedded submanifold. Thus, the nontrivial totally geodesic subsets are exactly the horizontal and vertical planes. We provide an elementary proof that relies only on the classification of geodesics in $\HxR$ and Proposition~\ref{prop: fundlemma}.

\begin{classh}
A nontrivial totally geodesic subset in $\Hyp^2 \times \R$ is either a horizontal or vertical plane.
\end{classh}

\begin{proof}

Consider some nontrivial totally geodesic subset $S \subset \HxR$ which by Proposition~\ref{prop: fundlemma} is an embedded subsurface. Consider the intersection of $S$ with each horizontal plane. If $S$ only intersects a single horizontal plane, then it follows that $S$ must be the entire horizontal plane. Otherwise, $S$ must intersect every horizontal plane since it will contain a slant or vertical geodesic. The intersection of $S$ with each horizontal plane must be a single geodesic, since otherwise $S$ would contain an entire horizontal plane and then the entirety of $\HxR$. It follows $S$ is homeomorphic to $\R^2$.

 \begin{figure}[ht]
    \centering
    {    \includegraphics[width=0.45\textwidth]{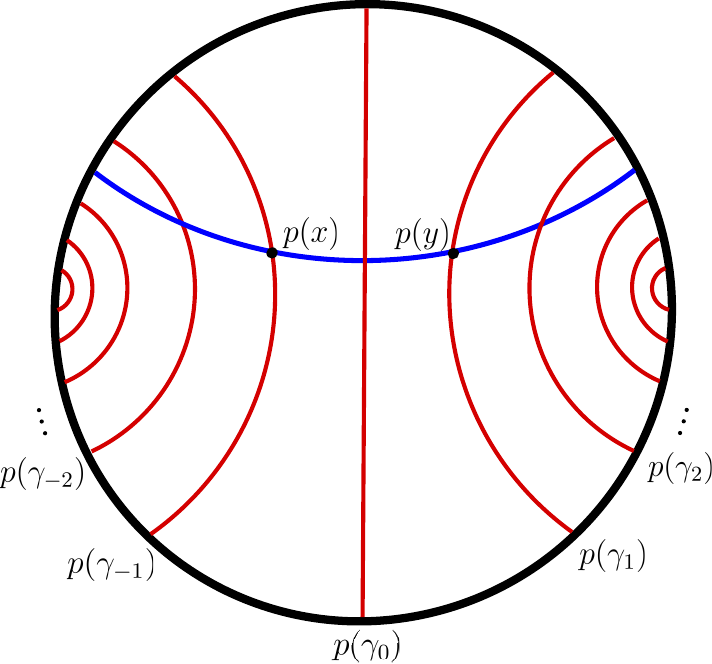} 
   \caption{For classifying the totally geodesic subsets of $\HxR$, the chosen geodesic from $p(x)$ to $p(y)$ intersects $p(\gamma_r)$ only for $r$ in a bounded range. The $p(\gamma_r)$ for integer $r$ are shown.}
    \label{figure:hyp}
    }
\end{figure}

 Let $\gamma_r$ denote the geodesic $S \cap (\Hyp^2 \times \{r\})$, and let $p$ denote the projection map to $\Hyp^2$. Observe if any two $p(\gamma_r)$ intersect, then $S$ contains a vertical geodesic. This vertical geodesic along with one of the $\gamma_r$ forces $S$ to contain a vertical plane, so we can assume otherwise that the $p(\gamma_r)$ are disjoint.

 Note that the $p(\gamma_r)$ are ordered monotonically in $\Hyp^2$, meaning $p(r_1)$ is to the left (without loss of generality) of $p(r_2)$ whenever $r_1 < r_2$. To see this, consider a slant geodesic in $S$ and note it hits every horizontal plane in order and thus every $\gamma_r$ in order.
 
Let $x$ and $y$ be arbitrary points on distinct $\gamma_r$, and call the slant geodesic between them $\gamma_{xy}$. We have that $\gamma_{xy}$ is contained in $S$ since it is totally geodesic. Note  $p(\gamma_{xy})$ is the geodesic of $\Hyp^2$ from $p(x)$ to $p(y)$. By choosing $x$ and $y$ appropriately, we can ensure the geodesic from $p(x)$ to $p(y)$ intersects $p(\gamma_r)$ only for $r$ within a bounded range. See Figure~\ref{figure:hyp}. This is a contradiction, since $\gamma_{xy}$ is a slant geodesic that must intersect each horizontal plane and thus each $\gamma_r$. \end{proof}

From this we obtain additional structure to work with for a geodesic-preserving bijection.

\begin{lemma} \label{planes}
    A geodesic-preserving bijection $f: \HxR \rightarrow \HxR$ takes horizontal planes to horizontal planes and vertical planes to vertical planes. Furthermore, $f$ restricts to an isometry on horizontal planes and an affine map on vertical planes.
\end{lemma}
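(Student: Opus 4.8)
The plan is to combine the classification of nontrivial totally geodesic subsets established above (horizontal and vertical planes) with Jeffers's classification of geodesic-preserving bijections of $\Hyp^2$ and $\E^2$ \cite{Jeffers}. First I would record that $f$ carries nontrivial totally geodesic subsets to nontrivial totally geodesic subsets. If $S$ is totally geodesic and $u,v \in f(S)$, then the geodesic joining $f^{-1}(u)$ and $f^{-1}(v)$ inside $S$ maps under $f$ to a geodesic joining $u$ and $v$ inside $f(S)$, so $f(S)$ is totally geodesic; it is not all of $\HxR$ since $f$ is injective and $S \ne \HxR$, and it is not a single geodesic, because two geodesics of $S$ meeting in exactly one point map to two distinct geodesics, which cannot both lie in one geodesic. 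Hence $f(S)$ is again a plane, and by the classification it is horizontal or vertical.

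The subtle point is that I also need $f^{-1}$ to be geodesic-preserving, so that $f$ induces a bijection of the set of planes preserving incidence. Here I would use that two distinct points of $\HxR$ lie on a unique geodesic, since the projections to the $\Hyp^2$ and $\R$ factors are the unique geodesics there, traversed at proportional speeds. Given a geodesic $\delta$, choose distinct $a,b \in f^{-1}(\delta)$ and let $\gamma$ be the geodesic through them; then $f(\gamma)$ is a geodesic through the distinct points $f(a),f(b)$, so $f(\gamma) = \delta$ by uniqueness, whence $\gamma = f^{-1}(\delta)$. Thus $f^{-1}(\delta)$ is a geodesic, $f^{-1}$ is geodesic-preserving, and applying the first paragraph to $f^{-1}$ shows that $f$ is a bijection of the set of planes.

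Next I would separate horizontal from vertical planes by an incidence invariant. Since $f$ and $f^{-1}$ are geodesic-preserving, the restriction $f|_S \colon S \to f(S)$ is a bijection carrying the geodesics of $S$ bijectively onto those of $f(S)$ and preserving disjointness of geodesics, as $f$ is injective on sets. I would then invoke the parallel postulate: through a given point of a vertical plane ($\cong \E^2$) there is exactly one geodesic disjoint from a given geodesic, whereas in a horizontal plane ($\cong \Hyp^2$) there are infinitely many. This count is preserved by $f|_S$, so a horizontal plane cannot map to a vertical plane nor conversely; hence $f$ takes horizontal planes to horizontal planes and vertical planes to vertical planes.

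Finally, for a horizontal plane $S \cong \Hyp^2$, the map $f|_S$ is a geodesic-preserving bijection between two copies of $\Hyp^2$; composing with isometric identifications and applying Jeffers shows $f|_S$ is an isometry. Likewise, on a vertical plane $\cong \E^2$, the map $f|_S$ is a geodesic-preserving bijection between two copies of $\E^2$, so by the planar Euclidean case of Jeffers it is affine. The main obstacle is the inverse issue in the second paragraph: without knowing $f^{-1}$ is geodesic-preserving one cannot match up the geodesics of $S$ with those of $f(S)$, and the incidence invariant in the third paragraph would fail to transfer; this is precisely what uniqueness of geodesics in $\HxR$ is used to overcome.
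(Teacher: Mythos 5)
Your proposal is correct and follows essentially the same route as the paper: planes map to planes via the classification of nontrivial totally geodesic subsets, horizontal and vertical planes cannot be interchanged because the parallel postulate distinguishes $\E^2$ from $\Hyp^2$, and the restrictions are identified via Jeffers's classification. The only difference is that you carefully verify facts the paper leaves implicit (that images of nontrivial totally geodesic subsets are again nontrivial totally geodesic subsets, and that $f^{-1}$ is geodesic-preserving via uniqueness of geodesics in $\HxR$), which is sound but not a different argument.
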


\begin{proof} 
Recall horizontal planes are isometric to $\Hyp^2$ and vertical planes are isometric to $\E^2$. We have the first statement of the lemma by the classification of totally geodesic subsets combined with the fact that there are no geodesic-preserving bijections between $\E^2$ and $\Hyp^2$ due to the parallel postulate being satisfied by one but not the other. Then the second statement follows by applying the classification of geodesic-preserving bijections for $\E^2$ and $\Hyp^2$ due to Jeffers \cite{Jeffers}.
\end{proof}

\begin{lemma} \label{preserved}
         A geodesic-preserving bijection $f: \HxR \rightarrow \HxR$ preserves the classes of geodesics.
\end{lemma}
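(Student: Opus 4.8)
The plan is to treat the three classes—horizontal, vertical, and slant geodesics—one at a time, using the plane-preservation and restriction statements of Lemma~\ref{planes} as the main input. As a preliminary step I would record that $f^{-1}$ is again a geodesic-preserving bijection, so that every argument can be run symmetrically for $f$ and $f^{-1}$ and the ``onto'' halves come for free. From the product description of geodesics one checks that any two distinct points of $\HxR$ lie on a unique complete geodesic; hence for a geodesic $\gamma$ with distinct points $a,b\in\gamma$, the unique geodesic $\delta$ through $f^{-1}(a)$ and $f^{-1}(b)$ satisfies $f(\delta)=\gamma$, so $f^{-1}(\gamma)=\delta$ is a geodesic. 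In particular both $f$ and $f^{-1}$ carry horizontal planes to horizontal planes and vertical planes to vertical planes, and $f$ induces a bijection of the set of all geodesics onto itself.

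The horizontal case is then immediate: a horizontal geodesic lies in a horizontal plane, which by Lemma~\ref{planes} is sent to a horizontal plane by a map restricting to an isometry of $\Hyp^2$, and isometries take geodesics to geodesics. So $f$ sends horizontal geodesics to horizontal geodesics, and the same argument applied to $f^{-1}$ shows this restriction is a bijection.

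For the vertical case I would realize a vertical geodesic as an intersection of planes. Choosing two distinct geodesics $\gamma_1\neq\gamma_2$ of $\Hyp^2$ through a point $p$, we have $\{p\}\times\R=(\gamma_1\times\R)\cap(\gamma_2\times\R)$. Since $f$ is a bijection it commutes with intersections, so $f(\{p\}\times\R)=f(\gamma_1\times\R)\cap f(\gamma_2\times\R)$ is the intersection of two distinct vertical planes $\gamma_1'\times\R$ and $\gamma_2'\times\R$. Such an intersection equals $(\gamma_1'\cap\gamma_2')\times\R$, which is a single vertical geodesic when $\gamma_1',\gamma_2'$ cross and is empty otherwise, because distinct geodesics of $\Hyp^2$ meet in at most one point. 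As the image is nonempty, $f(\{p\}\times\R)$ is a vertical geodesic; applying the same to $f^{-1}$ shows $f$ restricts to a bijection of vertical geodesics.

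The slant case then follows formally: the horizontal, vertical, and slant geodesics partition all geodesics, $f$ acts as a bijection of this total set, and it has just been shown to restrict to bijections of the horizontal and vertical classes, so it must restrict to a bijection of the complementary slant class as well. The step I expect to be the main obstacle is the vertical case, since a vertical geodesic is not itself a totally geodesic plane and so Lemma~\ref{planes} does not apply to it directly; the key device is to present it as an intersection of two vertical planes and exploit that a bijection preserves intersections, together with the elementary fact about how vertical planes meet. Establishing at the outset that $f^{-1}$ is geodesic-preserving is what makes the surjectivity in each case—and hence the slant conclusion—rigorous.
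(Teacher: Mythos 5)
Your proposal is correct and follows essentially the same route as the paper: horizontal geodesics are handled via the plane-preservation and isometry statements of Lemma~\ref{planes}, vertical geodesics are realized as intersections of two vertical planes, and slant geodesics follow by elimination. The extra preliminaries (that $f^{-1}$ is geodesic-preserving, via uniqueness of geodesics between points in the Hadamard manifold $\HxR$) are sound and merely make explicit the symmetry the paper leaves implicit.
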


\begin{proof}

It follows from Lemma~\ref{planes} that $f$ must take horizontal geodesics to horizontal geodesics.

Observe any vertical geodesic $v$ is the intersection of two vertical planes, and conversely, the intersection of two distinct vertical planes is either empty or a vertical geodesic. Since $f$ maps these vertical planes to vertical planes by Lemma~\ref{planes}, we must have that $v$ maps to a vertical geodesic.
\end{proof}

Recall that $f$ acts as an isometry between horizontal planes, and we want to compare these isometries with each other. From the action of $f$ on the vertical geodesics given by Lemma~\ref{preserved}, we get a well-defined bijection $f_\star$ on $\Hyp^2$. Since $f$ sends vertical planes to vertical planes, $f_\star$ takes geodesics to geodesics, and so $f_\star$ is an isometry by \cite{Jeffers}.

\begin{lemma} \label{isomsagree}
 A geodesic-preserving bijection $f: \HxR \rightarrow \HxR$ acts via the same isometry on each horizontal plane.
\end{lemma}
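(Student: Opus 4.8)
The plan is to write $f$ in coordinates adapted to the product structure and then use the vertical geodesics to tie together the fiberwise isometries. By Lemma~\ref{planes}, $f$ carries each horizontal plane $\Hyp^2 \times \{r\}$ onto a single horizontal plane; since the whole plane has one image plane, the target height depends only on $r$, so there is a bijection $\sigma : \R \to \R$ with $f(\Hyp^2 \times \{r\}) = \Hyp^2 \times \{\sigma(r)\}$. Restricting $f$ to this plane gives an isometry $\phi_r \in \Isom(\Hyp^2)$, and we may write
\[
f(p, r) = (\phi_r(p),\, \sigma(r)).
\]
It then suffices to show that $\phi_r$ is independent of $r$.

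Next I would exploit the map $f_\star$ on $\Hyp^2$ built from the action of $f$ on vertical geodesics. Fix $q \in \Hyp^2$. By Lemma~\ref{preserved} the vertical geodesic $\{q\} \times \R$ maps under $f$ to a vertical geodesic, which by the definition of $f_\star$ is exactly $\{f_\star(q)\} \times \R$. Consequently, for every $r \in \R$ the image point $f(q, r)$ lies on $\{f_\star(q)\} \times \R$, so its $\Hyp^2$-coordinate equals $f_\star(q)$. Comparing with the coordinate expression above, the $\Hyp^2$-coordinate of $f(q, r)$ is $\phi_r(q)$, whence $\phi_r(q) = f_\star(q)$. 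Since $q$ and $r$ were arbitrary, this yields $\phi_r = f_\star$ for all $r$, so $f$ acts by the single isometry $f_\star$ on every horizontal plane.

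The argument is short because the only genuine input is that a point $(q, r)$ lies simultaneously in the horizontal plane $\Hyp^2 \times \{r\}$ and in the vertical geodesic $\{q\} \times \R$; the fiberwise isometry $\phi_r$ and the base isometry $f_\star$ are thereby forced to agree at $q$. The one point needing care is the well-definedness of the coordinate description, namely that the target height $\sigma(r)$ does not depend on $p$, which is precisely the content of Lemma~\ref{planes}. No estimate or compactness hypothesis is required, so I do not anticipate a serious obstacle here.
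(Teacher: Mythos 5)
Your proof is correct and takes essentially the same approach as the paper: the paper likewise uses Lemma~\ref{preserved} to define $f_\star$ from the action of $f$ on vertical geodesics and then identifies the isometry that $f$ induces on each horizontal plane with $f_\star$, since every point $(q,r)$ lies on the vertical geodesic $\{q\}\times\R$. Your coordinate formulation $f(p,r)=(\phi_r(p),\sigma(r))$ simply makes this identification explicit (and even spares the paper's extra appeal to Jeffers for $f_\star$, since $\phi_r$ is already an isometry by Lemma~\ref{planes}).
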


Similarly, since horizontal planes are preserved by Lemma~\ref{preserved}, we define a bijection $f_\R: \R \to \R$ describing the action of $f$ on horizontal planes. 

\begin{lemma} \label{ractionaffine}
        $f_\R$ is an affine transformation of $\R$. 
\end{lemma}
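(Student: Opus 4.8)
The plan is to extract the affineness of $f_\R$ from the action of $f$ on a single, conveniently chosen slant geodesic, exploiting the fact that such a geodesic covers every height. First I would record the product form of $f$. Since $f$ sends each vertical geodesic $\{x\} \times \R$ to a vertical geodesic $\{f_\star(x)\} \times \R$ (Lemma~\ref{preserved}), the $\Hyp^2$-coordinate of $f(x,r)$ equals $f_\star(x)$, independent of $r$; and by the definition of $f_\R$ the $\R$-coordinate equals $f_\R(r)$. Combining this with Lemma~\ref{isomsagree} and the fact that $f_\star$ is an isometry of $\Hyp^2$, we obtain the clean product form $f(x,r) = (f_\star(x), f_\R(r))$.

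Next I would test this map against a specific slant geodesic. I pick a unit-speed geodesic $\sigma \colon \R \to \Hyp^2$ and set $\gamma(t) = (\sigma(t), t)$, which is a slant geodesic since both projections are constant-speed geodesics sharing the parameter $t$. Applying $f$ yields the set $f(\gamma) = \{(\tau(t), f_\R(t)) : t \in \R\}$, where $\tau = f_\star \circ \sigma$ is again a unit-speed (hence injective) geodesic of $\Hyp^2$, because $f_\star$ is an isometry. The key structural point is that as $t$ ranges over $\R$ the height $f_\R(t)$ ranges over all of $\R$, so any identity we derive will be global rather than confined to a subinterval.

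Then I would use that $f(\gamma)$ is genuinely a complete geodesic. By Lemma~\ref{preserved} it is slant, so as a set it admits a parameterization $s \mapsto (\tau(\alpha s + c_0), \beta s + d_0)$ with $\alpha, \beta \neq 0$, its $\Hyp^2$-projection being exactly the geodesic $\tau$. Since geodesics of $\Hyp^2$ are injective, matching this with the description $\{(\tau(t), f_\R(t))\}$ forces, for each $t$, a unique $s$ satisfying $t = \alpha s + c_0$ and $f_\R(t) = \beta s + d_0$; eliminating $s$ gives $f_\R(t) = (\beta/\alpha)\,t + (d_0 - \beta c_0/\alpha)$, which is affine with nonzero leading coefficient.

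The main obstacle is that $f_\R$ is not assumed continuous, let alone differentiable, so one cannot simply differentiate a ``constant speed'' condition to conclude. The argument above sidesteps differentiation entirely: it reads off the form of $f_\R$ from an exact parameter correspondence between the complete geodesic $\gamma$ and its complete image $f(\gamma)$, with the injectivity of the hyperbolic geodesic $\tau$ making the matching of parameters unambiguous. The only point requiring care is confirming that a slant geodesic meets every horizontal plane, which is what guarantees the affine identity holds on all of $\R$.
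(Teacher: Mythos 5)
Your proof is correct, but it takes a genuinely different route from the paper's. The paper's proof is a two-line consequence of Lemma~\ref{planes}: restrict $f$ to a vertical plane $V \cong \E^2$, where it already acts as an affine map of the Euclidean plane; since this affine map takes horizontal lines to horizontal lines and its action on heights is exactly $f_\R$, affineness of $f_\R$ is immediate. The paper's key input is thus Jeffers' classification for $\E^2$, packaged in Lemma~\ref{planes}. You instead never touch the affine structure of vertical planes: you push the single slant geodesic $t \mapsto (\sigma(t), t)$ through $f$, note that its image $\{(\tau(t), f_\R(t))\}$ must, being a slant geodesic, admit a constant-speed parameterization $s \mapsto (\tau(\alpha s + c_0), \beta s + d_0)$ with $\alpha, \beta \neq 0$, and use injectivity of the hyperbolic geodesic $\tau$ to eliminate $s$ and read off that $f_\R$ is affine. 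Your key inputs are Lemma~\ref{preserved} (so the image is slant and the product form $f(x,r) = (f_\star(x), f_\R(r))$ holds) and the fact, established just before Lemma~\ref{isomsagree}, that $f_\star$ is an isometry---this is genuinely needed, since without it $\tau = f_\star \circ \sigma$ would only be an injective parameterization of a geodesic's image and the parameter matching would not yield affineness. What your approach buys is a self-contained extraction of the affine identity from one geodesic and the product structure, bypassing the $\E^2$ classification (though still leaning on the $\Hyp^2$ one through $f_\star$); what the paper's buys is brevity, since Lemma~\ref{planes} has already done the work. Both correctly avoid any differentiation, which matters because $f_\R$ is not assumed continuous.
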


\begin{proof}
 Let $V$ be some vertical plane. By Lemma~\ref{preserved}, $f$ takes $V$ to another vertical plane via an affine map. Since $f$ takes horizontal lines of $V$ to horizontal lines of $f(V)$, and this action on horizontal lines agrees with $f_\R$, it follows $f_\R$ is affine.
\end{proof}

We have the main theorem for this section by combining the above lemmas. 

\begin{proof} [Proof of Theorem~\ref{hyp}]
    Combine Lemma~\ref{isomsagree} and Lemma~\ref{ractionaffine}.
\end{proof}

\section{\texorpdfstring{Geodesic-preserving bijections of $\SxR$}{Geodesic-preserving bijections of S2 times R}} \label{sec: sphere}

Now we show that a geodesic-preserving bijection of $\SxR$ is an isometry composed with an affine map in the $\R$-coordinate. The isometry group is exactly $\Isom(\Sp^2) \times \Isom(\R)$, so we may describe the group of geodesic-preserving bijections as $ \Isom(\Sp^2) \times \Aff(\R) $.

\p{Geodesics} Similar to the previous case, the geodesics of $\Sp^2 \times \R$ are either horizontal, vertical, or slant. Unlike the previous case, however, there is a unique geodesic between two points if and only if they are in a common sphere $\Sp^2 \times \{r\}$. Otherwise, there are infinitely many slant geodesics that spiral around a great circle (a geodesic of the sphere) a different number of times on their journey between the two points.

The totally geodesic subsurfaces of $\Sp^2 \times \R$ are similar to the hyperbolic case -- they are exactly the \textit{horizontal spheres} $\Sp^2 \times \{r\}$ and the \textit{vertical cylinders} $\gamma \times \R$ for a great circle $\gamma$. Proposition~\ref{prop: fundlemma} does not apply to $\Sp^2 \times \R$ since a sequence of slant geodesics through a basepoint can return arbitrarily close to the basepoint, but we do  not appeal to a classification of totally geodesic subsets in this case. We instead achieve preservation of spheres and cylinders through a direct argument using the different possible intersection patterns between the types of geodesics. Then we appeal to the classification of geodesic-preserving bijections of the cylinder which we now discuss.

\subsection{Geodesic-preserving bijections of the cylinder} \label{sec:cylinder}

  We first discuss a new type of geodesic-preserving bijection in the case of the cylinder, $\Sp^1 \times \R$. Parameterizing the $\Sp^1$ coordinate as $\R \mod \Z$, we define a twisting map $t_\alpha$ for some given $\alpha \in \R$ by the following: \[t_\alpha: \Sp^1 \times \R \rightarrow \Sp^1 \times \R\]
 \[(r_1, r_2) \mapsto (r_1 + \alpha r_2, r_2)\]

Note that a nontrivial twisting map is a geodesic-preserving bijection of the cylinder that is not an isometry. Another possibility is a map that fixes the circle coordinate but acts as an affine map in the $\R$-coordinate. We show that the new twisting maps along with the other standard possibilities generate all geodesic-preserving bijections.

\begin{theorem} \label{prop: cylinder2}
    Let $f: \Sp^1 \times \R \rightarrow \Sp^1 \times \R$ be a geodesic-preserving bijection. Then $f$ is a product of 

\begin{itemize}
    \item an isometry,
    \item an affine map in the $\R$-coordinate,
    \item and a twisting map.
\end{itemize}
\end{theorem}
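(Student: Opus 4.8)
The plan is to exploit the dichotomy between the unique compact geodesics and the remaining ones, straighten the vertical geodesics using a twist to reduce $f$ to a product map, and then extract the affine/linear structure from the slant geodesics via a functional equation, using that the only ring endomorphism of $\R$ is the identity to bypass the lack of continuity. First I would record the geometry: the horizontal circles $\Sp^1 \times \{r\}$ are exactly the compact geodesics, while every vertical or slant geodesic is a graph over the whole height line $\R$ meeting each horizontal circle once. Two distinct non-compact geodesics are disjoint precisely when they are parallel (both vertical, or both slant of equal slope), and otherwise meet in a bi-infinite set whose heights form an arithmetic progression. Consequently $f$ permutes the compact geodesics, inducing a height bijection $f_\R$, and carries non-compact geodesics to non-compact geodesics.

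Next I would straighten the vertical geodesics. Since they are pairwise disjoint, so are their images, which forces the images to be mutually parallel; hence all vertical geodesics map to geodesics of one common slope $m_0$. Composing with the twist $t_{-m_0}$ produces a geodesic-preserving bijection $f_1$ that carries vertical geodesics to vertical geodesics and still preserves each horizontal circle. Since each point is the unique intersection of one vertical geodesic and one horizontal circle, $f_1$ must be a product map $f_1(p,r) = (g(p), f_\R(r))$ for a circle bijection $g$ and the height bijection $f_\R$.

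The core step analyzes $f_1$ on slant geodesics. Writing a slant geodesic as $\{(\theta_0 + mr,\, r)\}$, the requirement that its image be a geodesic becomes $g(\theta_0 + mr) \equiv \psi_0 + \mu(m) f_\R(r) \pmod 1$, where the parallel-class argument shows the image slope $\mu(m)$ depends only on $m$. Comparing different base angles $\theta_0$ first shows $g$ is additive up to a constant; then, with $F = f_\R - f_\R(0)$ and $D$ the additive part of $g$, the relations collapse to $\mu(m) F(r) \equiv D(mr) \pmod 1$ for all $m, r$. I would then play the additive structure against the multiplicative rescaling $m \mapsto \lambda m$, $r \mapsto r/\lambda$: because $f_\R$ is a bijection of $\R$ (so $F$ assumes uncountably many values) and $\mu$ is a bijection of the slopes, any additive integer-valued error term is forced to vanish, showing $F$ and $\mu$ are additive and proportional, and that $\phi = F/F(1)$ is simultaneously additive and multiplicative with $\phi(1) = 1$.

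The main obstacle is exactly this last point: upgrading the merely additive, Cauchy-type solutions, which a priori include wildly discontinuous maps permitted by pure incidence geometry, to genuine affine maps. The resolution is that $\phi$ is a ring endomorphism of $\R$, and the only such endomorphism is the identity; this is where the otherwise-missing order rigidity enters and rules out the pathologies. With $\phi = \mathrm{id}$ we conclude $f_\R$ is affine and $g(\theta) \equiv g(0) \pm \theta \pmod 1$ is a rotation or reflection, so $f_1$ is an isometry composed with an affine map in the $\R$-coordinate, and $f = t_{m_0} \circ f_1$ is a product of an isometry, an affine map in the $\R$-coordinate, and a twisting map, as claimed.
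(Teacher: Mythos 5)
Your proposal is correct, but it takes a genuinely different route from the paper's proof, so a comparison is worthwhile. Both arguments open the same way: the horizontal circles are singled out by incidence (the paper's Lemma~\ref{spherehoriz}; your observation that a geodesic is horizontal exactly when no other geodesic meets it more than once), and both use twisting maps to normalize vertical geodesics. From there the paper normalizes $f$ to fix two points and a vertical geodesic $v$, uses ``guaranteed sets'' to show all dyadic heights on $v$ are fixed, proves that $f$ descends to a geodesic-preserving bijection of the torus, invokes the external theorem of Limbeek--Shulkin that such torus bijections are affine, and then bootstraps back to conclude $f$ is the identity. You instead straighten all verticals simultaneously (pairwise disjoint non-compact geodesics share a slope, so one twist makes $f_1$ carry verticals to verticals), observe that $f_1$ is then a product map $g \times f_\R$, and reduce everything to the congruence $\mu(m)F(r) \equiv D(mr) \pmod 1$ coming from slant geodesics. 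I checked that your algebraic endgame goes through: $\mu(m)(F(r+r')-F(r)-F(r')) \in \Z$ for all $m$, with $\mu$ surjective, forces $F$ additive; symmetrically $\mu$ is additive since $F$ is surjective; the functions $m \mapsto \mu(m)F(1)-\mu(1)F(m)$ and $r \mapsto (\mu(1)/F(1))(F(m)F(r)-F(1)F(mr))$ are additive and $\Z$-valued, hence identically zero; so $\phi = F/F(1)$ is additive, multiplicative, and unital, hence the identity, and then $D(t) \equiv \mu(1)F(1)\,t \pmod 1$ forces $\mu(1)F(1) \in \Z$ and, by bijectivity of $g$, equal to $\pm 1$. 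Your route buys self-containedness---no torus theorem, no dyadic bookkeeping---and it isolates exactly where discontinuous Cauchy-type pathologies are excluded, namely in the rigidity of ring endomorphisms of $\R$. The paper's route, in exchange, connects the cylinder to the compact-quotient literature and reuses its normalization machinery elsewhere in the section.

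Three small points to tighten. First, since $f$ is not assumed continuous, compactness itself cannot be used to see that horizontal circles are permuted; the incidence characterization above (which follows from the facts you list) is what does the work, so state it explicitly. Second, after the twist, $f_1$ permutes the horizontal circles via $f_\R$ rather than preserving each one setwise. Third, the surjectivity of $\mu$ onto the set of slant slopes and of $f_\R$ onto $\R$---which is precisely what legitimizes the error-killing steps---should be recorded by running your parallel-class and incidence arguments for $f^{-1}$ as well; also note that your ``slope'' (angular rate per unit height, on which twists act additively, so that $t_{-m_0}$ is indeed the right straightening map) differs from the paper's slope convention.
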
 

\p{Outline} Our plan is to show that a geodesic-preserving bijection $f$ descends to a well-defined bijection $f_\star$ on the torus $\Sp^1 \times \Sp^1$ that preserves geodesics, so we can apply a theorem of Limbeek and Shulkin \cite{ShulkinVanLimbeek2017} that says $f_\star$ is an affine map of the torus. We then argue that after composing $f$ with additional geodesic-preserving bijections that $f_\star$ is the identity. It then requires more work to show that $f$ is the identity, since $f_\star$ being the identity only tells us that $f$ is the identity modulo the action of $\Z$ by translations.

We say the \textit{slope} of a geodesic $\gamma$ is $\infty$ when it is vertical, 0 when it is horizontal, and $r$ when $\gamma$ is slant where $r$ is the vertical displacement between two adjacent intersection points of $\gamma$ with a vertical geodesic. In order to get a well-defined sign for the slope we choose that the vertical displacement is measured traveling clockwise around the cylinder along the slant geodesic. 

Our first observation about classes of geodesics under a geodesic-preserving bijection is the following.

\begin{lemma} \label{spherehoriz}
    A geodesic-preserving bijection $f: \Sp^1 \times \R \rightarrow \Sp^1 \times \R$ takes horizontal geodesics to horizontal geodesics.
\end{lemma}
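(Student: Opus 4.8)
The plan is to characterize horizontal geodesics purely by their intersection pattern with other geodesics, and then to transport that characterization through $f$ using only the fact that $f$ is a bijection. The key elementary observation is that a horizontal geodesic meets every other geodesic in at most one point: it is disjoint from the other horizontals, meets each vertical geodesic once, and meets each slant geodesic once (a slant geodesic is strictly monotone in the $\R$-coordinate, so it crosses a given height exactly once). By contrast, a vertical geodesic is met infinitely often by every slant geodesic, and a slant geodesic is met infinitely often by every vertical geodesic and by every slant geodesic of a different slope. Thus, among the three classes, the horizontal geodesics are exactly those $\gamma$ for which no geodesic meets $\gamma$ more than once.

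Since $f$ is a bijection of the cylinder, for any two geodesics $\gamma,\delta$ one has $f(\gamma\cap\delta)=f(\gamma)\cap f(\delta)$, so $f$ preserves the cardinality $|\gamma\cap\delta|$ of every intersection. I will exploit this together with the fact that the horizontal geodesics foliate the cylinder. Suppose $\gamma$ is horizontal but, for contradiction, $f(\gamma)$ is not. Writing $h_r=\Sp^1\times\{r\}$ for the horizontal geodesics, the family $\{f(h_r)\}$ is pairwise disjoint and, because $f$ is a bijection, still foliates the cylinder. Each leaf $f(h_r)$ is nonhorizontal: $f(\gamma)$ is nonhorizontal by assumption, and the remaining leaves are disjoint from it while every horizontal geodesic meets every nonhorizontal geodesic. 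Moreover, any two disjoint nonhorizontal geodesics share the same slope (a vertical and a slant always meet, and two slants of distinct slopes always meet). Hence the leaves all share one slope and, foliating the cylinder, exhaust an entire parallel class: either all vertical geodesics, or all slant geodesics of one fixed slope.

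Now I compare the images of a vertical geodesic and a slant geodesic. A vertical geodesic $v$ and a slant geodesic $\sigma$ each meet every horizontal $h_r$ in exactly one point, so $f(v)$ and $f(\sigma)$ each meet every leaf $f(h_r)$ in exactly one point. A geodesic meeting every member of such a parallel class exactly once must itself be horizontal, as one checks from the intersection counts above (a member of the class misses the other members, and any other nonhorizontal geodesic meets some leaf infinitely often). Hence both $f(v)$ and $f(\sigma)$ are horizontal. But $|v\cap\sigma|=\infty$ forces $|f(v)\cap f(\sigma)|=\infty$, which is impossible for two horizontal geodesics unless they coincide, since distinct horizontals are disjoint. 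Thus $f(v)=f(\sigma)$, and injectivity of $f$ gives $v=\sigma$, a contradiction. This rules out $f(\gamma)$ being nonhorizontal and finishes the proof.

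The main obstacle, and the reason the argument is organized this way, is that $f$ is not assumed continuous and is only known to send geodesics \emph{forward} to geodesics; in particular $f^{-1}$ is not a priori geodesic-preserving, so one cannot simply transport the ``meets $\gamma$ at most once'' property backward, nor invoke preservation of compactness to detect the closed horizontal geodesics directly. The device that circumvents this is to reason entirely with forward images of concrete geodesics, using the horizontal foliation of the cylinder (which any point-bijection preserves) to pin down $\{f(h_r)\}$ as a full parallel class, and then to extract the contradiction from the single infinite intersection $|v\cap\sigma|=\infty$.
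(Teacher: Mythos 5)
Your proof is correct, and it takes a genuinely different route from the paper's. The paper's proof is a two-sentence counting argument on pairs of points: two points at different heights are joined by infinitely many geodesics, while two distinct points at the same height are joined by exactly one, so $f$ cannot carry a vertical or slant geodesic onto a horizontal one; the remaining case (a horizontal carried to a vertical or slant, i.e.\ the $f^{-1}$ case) is dispatched with a ``without loss of generality.'' You instead characterize geodesics by pairwise intersection cardinalities, which any injection preserves on forward images, and exploit the partition of the cylinder by horizontals: if one horizontal had nonhorizontal image, then all the images $f(h_r)$ would be pairwise disjoint nonhorizontal geodesics partitioning the cylinder, hence a full parallel class; then $f(v)$ and $f(\sigma)$, for $v$ vertical and $\sigma$ slant, would each meet every leaf exactly once and so be horizontal, contradicting $|f(v)\cap f(\sigma)|=|v\cap\sigma|=\infty$. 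What your longer argument buys is precisely the issue you flag at the end: the genuine negation of the lemma is the $f^{-1}$ case, and since $f^{-1}$ is not a priori geodesic-preserving, the paper's forward count does not apply to it verbatim---the ``without loss of generality'' there is doing real work that is not justified as written. Your proof runs entirely on forward images and so covers that case rigorously; the paper's proof is far shorter, and its forward half (no vertical or slant maps onto a horizontal) is what gets reused later in the section, but as a self-contained proof of the stated lemma yours is the more airtight of the two.
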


\begin{proof}
    Suppose otherwise, so that either $f$ or $f^{-1}$ takes a vertical or slant geodesic $\gamma$ to a horizontal geodesic. Given two distinct points on $\gamma$, there are infinitely many geodesics between these two points, but $f$ (without loss of generality) maps these points to points with a unique horizontal geodesic between them, a contradiction. \end{proof}

With this in mind, we assume the following simplifications for the lemmas of this section.

\begin{remark} \label{rmk}
For the remainder of the section, we fix a vertical geodesic $v$ and refer to points on $v$ by their height. For example, $0$ refers to the point of height $0$ on $v$. Since $f$ takes horizontal geodesics to horizontal geodesics, the points 0 and 1 are mapped by $f$ to points at different heights. Therefore, after composing $f$ with a twisting map and an affine map in the $\R$-coordinate, we may assume that $f$ fixes 0 and 1. By composing with another twisting map, we may assume $f$ fixes $v$ setwise as well. Our goal is to show that $f$ is the identity after possibly composing with an additional isometry, in particular, a reflection that fixes $v$.
\end{remark}

\begin{lemma}
   $f$ takes vertical geodesics to vertical geodesic and slant geodesics to slant geodesics.
\end{lemma}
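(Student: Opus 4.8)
The plan is to characterize vertical geodesics intrinsically among all non-horizontal geodesics using only the incidence relation of intersection, and then to exploit the fact (supplied by Remark~\ref{rmk}) that $f$ fixes the reference geodesic $v$ setwise. The crucial structural point is that a geodesic-preserving bijection $f$ induces a bijection on the set of all geodesics: it is injective on geodesics because it is injective as a map of the cylinder, and surjective because $f^{-1}$ also sends geodesics to geodesics. This induced bijection preserves both ``intersecting'' and ``disjoint'': if $\gamma_1 \cap \gamma_2 = \emptyset$ then $f(\gamma_1) \cap f(\gamma_2) = \emptyset$, since a common point $z = f(x_1) = f(x_2)$ of the images would force $x_1 = x_2 \in \gamma_1 \cap \gamma_2$ by injectivity.

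First I would record the elementary intersection pattern for $v = \{p_0\} \times \R$. Two distinct vertical geodesics are disjoint, whereas every horizontal and every slant geodesic meets $v$. The horizontal case is immediate, since $\Sp^1 \times \{c\}$ contains $(p_0, c)$. For a slant geodesic, parameterizing it as $t \mapsto (a + t, b + rt)$ with $r \neq 0$ shows that it returns to angular position $p_0$ at every height $b + r((p_0 - a) + n)$ with $n \in \Z$, so it in fact meets $v$ infinitely often. Hence the set of geodesics disjoint from $v$ is exactly the set of vertical geodesics other than $v$, giving an intrinsic characterization of ``vertical'' relative to $v$.

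With this in hand the conclusion is formal. By Remark~\ref{rmk} we may assume $f(v) = v$, and by Lemma~\ref{spherehoriz} applied to both $f$ and $f^{-1}$, the induced bijection restricts to a bijection of the horizontal geodesics. Since $f$ preserves disjointness and fixes $v$, it carries the family of geodesics disjoint from $v$ onto the family of geodesics disjoint from $f(v) = v$; by the previous paragraph this says precisely that $f$ maps vertical geodesics to vertical geodesics, and running the same argument for $f^{-1}$ (which also fixes $v$) shows the restriction is a bijection. Finally, because the horizontal, vertical, and slant geodesics partition the set of all geodesics and $f$ preserves the first two classes, it must preserve their complement, so $f$ takes slant geodesics to slant geodesics.

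The step requiring the most care is the intersection computation showing that every slant geodesic meets $v$ — equivalently, that a helix on the cylinder hits each vertical line at infinitely many heights. This is exactly what prevents a slant geodesic from being disjoint from $v$, and so it is what keeps the vertical class from leaking into the slant class; without the normalization $f(v)=v$ a twisting map would interchange these classes, so this is also where the hypothesis of Remark~\ref{rmk} is genuinely used. Everything else is bookkeeping with the disjointness relation.
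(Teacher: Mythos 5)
Your proof is correct and follows essentially the same route as the paper: characterize the vertical geodesics as precisely the geodesics disjoint from the fixed geodesic $v$ (using that horizontals and slants necessarily meet $v$), and then obtain preservation of slant geodesics by elimination via Lemma~\ref{spherehoriz}. You have merely made explicit the details the paper leaves implicit, namely the helix computation showing slants meet $v$ infinitely often and the fact that a geodesic-preserving bijection preserves disjointness.
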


\begin{proof}
    Vertical geodesics are preserved because they are exactly the geodesics disjoint from $v$, and $v$ is fixed. Horizontal geodesics are preserved by Lemma~\ref{spherehoriz}, so slant geodesics are preserved too.
\end{proof}

We introduce the notion of \textit{guaranteed sets} and use them to prove some facts about a geodesic-preserving $f$.

\p{Guaranteed sets} For a set of points $P$, we define the guaranteed set $G(P)$ to be the intersection of all the geodesics $\gamma$ such that $P \subset \gamma$. If no such geodesic exists, then we let $G(P) = P$.  When $P$ is a finite set such as $\{x,y,z\}$, we instead denote the guaranteed set $G(x,y,z)$.

Note that guaranteed sets are preserved by geodesic-preserving bijections, meaning $G(f(P)) = f(G(P))$. Also note that $P \subseteq G(P)$, and that $G(P)$ may be a single geodesic. As a basic example, the guaranteed set $G(x)$ for any point $x \in \Sp^2$ is $\{x, \alpha(x)\}$ where $\alpha$ is the antipodal map.

For the cylinder, we use the guaranteed sets $G(a,b)$ from two given points $a,b$. If $a$ and $b$ are at the same height, then $G(a,b)$ is the horizontal geodesic between them. If $a$ and $b$ lie on the same vertical geodesic $v$, then \[G(a,b) = \{a+n(b-a) \mid n \in \Z\} \subset v\] a set of evenly spaced points distance $|b-a|$ apart. Similarly, if $a$ and $b$ are any points at different heights, they are connected by a geodesic $s$ of maximum absolute value slope, and $G(a,b) \subset s$ is a subset of evenly spaced points along $s$.

\begin{lemma} \label{vfixed}
  $f$ fixes any point in $v$ of height $\frac{n}{2^m}$ where $n$ and $m$ are integers.
\end{lemma}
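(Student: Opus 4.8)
The plan is to use the guaranteed sets together with the normalization from Remark~\ref{rmk} to pin down the action of $f$ on a dense subset of $v$. Since $f$ fixes both $0$ and $1$, and guaranteed sets are preserved by geodesic-preserving bijections, the key observation is that $G(0,1)$ is exactly the set of integer-height points $\{n \mid n \in \Z\} \subset v$. Because $f$ fixes $0$ and $1$ and fixes $v$ setwise, $f$ must send this guaranteed set to itself, and the monotone ordering of heights along $v$ (which $f$ respects, possibly up to a reflection we are allowed to compose away) forces $f$ to fix every integer-height point. This handles the base case $m=0$.

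For the inductive step, I would argue that the dyadic points arise as guaranteed sets built from points we already know are fixed. The natural move is to produce the midpoint: given two fixed points $a$ and $b$ on $v$ with $b - a = \frac{1}{2^{m-1}}$, I want to exhibit the midpoint $\frac{a+b}{2}$ as a point forced to be fixed. The idea is that $G(a,b)$ consists of evenly spaced points distance $|b-a|$ apart, but to halve the spacing I need to realize $\frac{a+b}{2}$ as a guaranteed-set point arising from some auxiliary configuration. The cleanest route is to use a slant geodesic: pick a slant geodesic through $a$ that returns to $v$ at height $b$ after one loop, so that its intersections with $v$ are spaced $|b-a|$ apart; then a second slant geodesic (or the horizontal/vertical interplay) whose guaranteed set with a fixed point lands precisely on the midpoint. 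Since $f$ preserves slopes' classes and fixes the relevant endpoints, the intersection point $\frac{a+b}{2}$ must also be fixed. Iterating, every dyadic $\frac{n}{2^m}$ is obtained, since these are exactly the heights reachable by repeated midpoint-bisection starting from the integers.

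Formally, I would set up the induction on $m$: the statement is that $f$ fixes every point of height $\frac{n}{2^m}$. The base case $m = 0$ follows from $G(0,1) = \mathbb{Z} \cap v$ being preserved with $0,1$ fixed. For the inductive step, assume $f$ fixes all points of height $\frac{n}{2^{m-1}}$; I then show each point of height $\frac{n}{2^m}$ with $n$ odd is the midpoint of two adjacent fixed points at level $m-1$, and that this midpoint is realized as a preserved guaranteed-set point, hence fixed. The main obstacle is exhibiting the midpoint as a guaranteed set: a single $G(a,b)$ only recovers the evenly spaced integer multiples of $b - a$, never a finer subdivision, so bisection genuinely requires bringing in a second geodesic (a slant geodesic looping around the cylinder) and computing the height of its guaranteed-set intersections with $v$. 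Getting the slopes and winding numbers to cooperate so that the intersection lands exactly on the halfway point — and confirming that $f$'s preservation of slant geodesics together with the already-fixed endpoints forces this intersection to be fixed — is the delicate step; once that geometric configuration is in hand, the induction closes routinely.
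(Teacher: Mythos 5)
Your proposal has a genuine gap, and it is precisely at the step you yourself flag as ``delicate.'' The bisection step is never carried out, and the route you sketch cannot work in the form stated. You correctly observe that for two already-fixed points $a,b$ on $v$, the guaranteed set $G(a,b)$ consists of points spaced $|b-a|$ apart, so it can never isolate the midpoint; your proposed repair is to bring in auxiliary slant geodesics through fixed points. But at this stage of the argument the only points known to be fixed are on $v$ (indeed only $0$, $1$, and whatever you have fixed inductively), and no individual slant geodesic is known to be setwise preserved: $f$ may permute the infinitely many slant geodesics through $0$ and $1$ (those of slope $\pm 1, \pm\tfrac12, \pm\tfrac13,\dots$) among themselves, so you cannot force the intersection of a chosen slant with $v$ to be fixed. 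There is also a smaller gap in your base case: you appeal to $f$ respecting the ``monotone ordering of heights along $v$,'' but $f$ is not assumed continuous, so no order-preservation is available; from $f(\Z)=\Z$ setwise you still need an argument to fix each integer.

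The way out is to reverse the containment: instead of trying to realize the unknown point as a guaranteed set built from fixed points, constrain the image of the unknown point by noting that \emph{fixed} points lie in guaranteed sets involving it. Since $1 \in G(0,\tfrac12)$ and guaranteed sets are preserved, $1 \in G(0, f(\tfrac12))$, which forces $f(\tfrac12) \in \{\pm\tfrac12, \pm\tfrac13, \dots\}$ (reciprocals of integers); likewise $0 \in G(1,\tfrac12)$ forces $f(\tfrac12) \in \{1\pm\tfrac12, 1\pm\tfrac13,\dots\}$. The intersection of these two constraint sets is the single point $\tfrac12$, so $f(\tfrac12)=\tfrac12$, and iterating this two-sided constraint yields all dyadic heights. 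The same mechanism repairs your base case: $G(1,f(2)) = f(G(1,2)) = \Z$ forces $f(2)\in\{0,2\}$, and injectivity with $f(0)=0$ gives $f(2)=2$, then induct. Everything happens with pairs of points on $v$ alone; no auxiliary slant geodesic configuration is needed.
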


\begin{proof} Since 0 and 1 are fixed by $f$ and $G(0,1) = \Z$, it follows $f$ fixes $\Z$ setwise.
Next, we claim $f(2) = 2$. Since $G(1,2) = \Z$, applying $f$ we have $G(1,f(2)) = f(G(1,2)) = \Z$, but $G(1, x) = \Z$ only if $x$ is either 0 or 2. Since 0 is fixed, we must have $f(2) = 2$. By an induction argument, $f$ then fixes every point with integer height. Now since $1 \in G(0, \frac{1}{2})$, applying $f$ shows that $1 \in G(0, f(\frac{1}{2}))$ and thus 
\[f(\frac{1}{2}) \in \{ \pm \frac{1}{2}, \pm \frac{1}{3}, \cdots \}\] Similarly, since $0 \in G(1, f(\frac{1}{2}))$ we have that
\[f(\frac{1}{2}) \in \{ 1 \pm \frac{1}{2}, 1 \pm \frac{1}{3}, \cdots \}\] 

The only option is that $f(\frac{1}{2}) = \frac{1}{2}$, and it follows from the previous methods that every point of height $\frac{n}{2}$ on $v$ for some integer $n$ is fixed. Repeating this method shows that every point on $v$ of height $\frac{n}{2^m}$ is fixed for integers $m$ and $n$. \end{proof}

\begin{lemma} \label{}
    If $x$ and $y$ lie on the same vertical and $d(x,y)~=~1$, then we have $d(f(x),f(y))~=~1$.
\end{lemma}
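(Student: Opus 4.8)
The plan is to prove the stronger statement that $f$ sends the slope-$1$ slant geodesic through $x$ and $y$ to a slope-$1$ slant geodesic; the distance claim then drops out. Write $x=(\theta_0,h)$ and $y=(\theta_0,h+1)$ on a common vertical $w$, and let $\sigma$ be the slant geodesic of slope $1$ through $x$, which in the coordinates $\Sp^1=\R\bmod\Z$ is $\{(\theta_0+t,\,h+t)\}$. This $\sigma$ meets $w$ exactly in the spacing-$1$ progression $G(x,y)$. Since $f$ takes verticals to verticals and slants to slants, $f(\sigma)$ is a slant geodesic meeting the vertical $f(w)$ in $f(\sigma)\cap f(w)=f(G(x,y))=G(f(x),f(y))$, and the spacing of this set equals the absolute value of the slope of $f(\sigma)$. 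Thus it suffices to show that $f(\sigma)$ has slope $1$, since then $d(f(x),f(y))=1$.

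The key tool is the family of slope-$1$ slants through the fixed points of $v$. For a dyadic $q$, the slope-$1$ slant $\sigma_q=\{(t,\,q+t)\}$ meets $v$ exactly in the points of height $q+\Z$, all of which are fixed by Lemma~\ref{vfixed} and spaced $1$ apart. Hence $f(\sigma_q)$ is a slant meeting $v$ in the same set $q+\Z$, which forces the absolute value of its slope to be $1$. Now parallel slope-$1$ slants are disjoint, whereas a slope-$1$ and a slope-$(-1)$ slant always cross; applying $f$ to the mutually disjoint family $\{\sigma_q\}$ therefore shows that all the images carry the same sign of slope. Composing $f$ with the reflection fixing $v$ if necessary (permitted by Remark~\ref{rmk}), we may assume each $f(\sigma_q)$ has slope $1$, and since it passes through the fixed point $(0,q)$ we conclude $f(\sigma_q)=\sigma_q$. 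As $q$ ranges over the dyadics, these fixed slants are dense in the circle of slope-$1$ slants.

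To finish I compare $\sigma$ with this dense fixed family. If the offset $h-\theta_0$ is dyadic modulo $\Z$, then $\sigma$ is one of the $\sigma_q$ and is fixed, so $f(\sigma)$ has slope $1$. Otherwise $\sigma$ is disjoint from every $\sigma_q$, so $f(\sigma)$ is disjoint from every $\sigma_q=f(\sigma_q)$; since a slant of slope $\ne 1$ crosses every slope-$1$ slant, $f(\sigma)$ must itself have slope $1$. Either way $f(\sigma)$ has slope $1$, completing the argument. The main obstacle is precisely this last step: because $f$ is not assumed continuous, we cannot interpolate the conclusion at the non-dyadic offset $h-\theta_0$ from the dense set of fixed dyadic data. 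What rescues us is the \emph{rigidity of the slope-$1$ foliation}---the fact that slants of different slopes are forced to intersect---which converts mere disjointness from a dense family of fixed slants into an exact constraint on the slope of $f(\sigma)$.
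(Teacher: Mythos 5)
Your proof is correct and is essentially the paper's argument: both start from the fixed points supplied by Lemma~\ref{vfixed}, use the fact that a slant meeting $v$ in an integer-spaced set of fixed points must have slope $\pm 1$, propagate this to the slant through $x$ and $y$ via the dichotomy that parallel cylinder geodesics are disjoint while non-parallel ones must cross, and then convert ``slope $\pm 1$'' into $d(f(x),f(y))=1$ using guaranteed sets. The only cosmetic difference is that you build a dense dyadic family of fixed slope-$1$ slants (after normalizing the sign by composing with the reflection), whereas the paper gets by with the single pair $s,\tilde{s}$ of slope-$\pm 1$ lines through $0$ and disjointness from $s$; the density you emphasize in your closing paragraph is never actually needed, since disjointness from a single fixed slope-$1$ slant already pins down the slope of $f(\sigma)$.
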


\begin{proof}
    We first claim the set of lines of slope $\pm1$ are preserved. Let $s$ be the line of slope 1 through the point 0 on $v$, and let $\Tilde{s}$ be the line of slope -1 through 0. Note that the intersection of these geodesics with $v$ is $\Z$, and these are the only such geodesics with this property. Since $f$ fixes $\Z$ pointwise by Lemma~\ref{vfixed}, it follows $f$ permutes $\{s,\Tilde{s}\}$. Suppose $f(s) = s$, then lines of slope 1 are preserved since they are exactly the geodesics disjoint from $s$. Otherwise if $f(s) = \Tilde{s}$, then similarly the lines of slope 1 are taken to lines of slope -1, and vice versa. 

    Let $l_1$ be the vertical line through $x$ and $y$ and $l_2$ the line of slope 1 through $x$ and $y$. Since $f(l_1)$ is vertical and $f(l_2)$ has slope $\pm 1$, we see that $f(x)$ and $f(y)$ lie on the same vertical and $d(f(x),f(y)) \in \Z$. Since $G(x,y) = l_1 \cap l_2$ and \[G(f(x),f(y)) = f(G(x,y)) = f(l_1) \cap f(l_2)\] corresponds to the points at integer distance from $f(x)$ along $f(l_1)$, it follows  $d(f(x),f(y))=1$.
\end{proof}

The above lemma implies that $f$ descends to a well-defined bijection on the torus $f_\star$. Since $f$ is geodesic-preserving, $f_\star$ is geodesic-preserving as well. Next, we use the following.

\begin{lemma} (Limbeek--Shulkin \cite{ShulkinVanLimbeek2017})
    A geodesic-preserving bijection $f$ on the torus is an affine map.
\end{lemma}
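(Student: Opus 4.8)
The plan is to recover as much geometric data as possible from the cardinalities of intersections of geodesics, since $f$ is not assumed continuous and so cannot be lifted to the universal cover in any naive way. The geodesics of the flat torus $\Sp^1\times\Sp^1=\R^2/\Z^2$ are the projections of affine lines: those of rational (or infinite) slope are \emph{closed}, embedded circles, while those of irrational slope are \emph{dense}. The key observation is that these two classes are distinguished purely by intersection cardinality. Two non-parallel closed geodesics with primitive integer direction vectors $u,v\in\Z^2$ meet in exactly $|\det(u,v)|$ points, so a closed geodesic admits another geodesic meeting it in a finite, nonzero number of points. On the other hand, a dense geodesic meets every non-parallel geodesic in infinitely many points, while two parallel leaves either coincide or are disjoint. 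Hence a geodesic $\gamma$ is closed if and only if there is a geodesic $\delta$ with $0<|\gamma\cap\delta|<\infty$, a condition manifestly preserved by any geodesic-preserving bijection. Therefore $f$ preserves the closed geodesics and, consequently, the dense ones.

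Next I would use the closed geodesics to produce an element of $\mathrm{GL}_2(\Z)$. Two closed geodesics are disjoint exactly when they are distinct and parallel, so $f$ preserves parallelism and hence permutes the slope classes indexed by $\mathbb{P}^1(\mathbb{Q})$. Because $f$ preserves the cardinality $|\det(u,v)|$ of each intersection, the induced permutation $\sigma$ of primitive directions preserves the determinant pairing; starting from the basis $e_1,e_2$ (whose images must again form a $\Z$-basis, since $|\det|=1$ is preserved) one checks that $\sigma$ is induced by a matrix $A\in\mathrm{GL}_2(\Z)$. Composing $f$ with the torus automorphism $A^{-1}$, I may assume $f$ fixes every slope class setwise. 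In particular $f$ preserves the horizontal and vertical foliations, so it sends the horizontal circle at height $y$ to one at height $g(y)$ and the vertical circle at $x$ to one at $h(x)$; since $(x,y)$ is the unique intersection of these two circles, $f(x,y)=(h(x),g(y))$ is a product map.

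It remains to show $h$ and $g$ are affine, and this is where the discontinuity of $f$ makes the argument delicate. Tracking the slope-$1$ family $L_c=\{(t,t+c)\}$, preservation gives $g(t+c)=h(t)+\phi(c)$ for a bijection $\phi$; solving the resulting functional equation forces $h(u+s)-h(s)$ to be independent of $s$, so $H(u):=h(u)-h(0)$ is an \emph{additive} endomorphism of $\R/\Z$ and $g=H+g(0)$ as well. The main obstacle is that additivity alone does not force linearity: there exist wild, non-measurable additive bijections of $\R/\Z$. These are eliminated by feeding back the dense geodesics, which $f$ also preserves: the image of an irrational line of slope $\alpha$ is $\{(H(t)+a,\,H(\alpha t)+H(c)+b)\}$, and for this set to be a straight line we need $H(\alpha t)$ to be a fixed multiple of $H(t)$ for every $\alpha$. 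Combined with additivity this forces $H$ to be $\R$-linear, $H(t)=\lambda t$, and bijectivity of $t\mapsto\lambda t \bmod 1$ forces $\lambda=\pm1$. Thus $f(x,y)=(\pm x + a,\ \pm y + b)$ before restoring the $\mathrm{GL}_2(\Z)$ factor, so $f$ is affine. I expect the intersection-cardinality dichotomy of the first paragraph and the final elimination of wild additive maps via the dense geodesics to be the two crux points, with the remainder being bookkeeping.
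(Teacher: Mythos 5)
First, a framing point: the paper does not prove this lemma at all --- it is quoted as a black box from Shulkin--van Limbeek \cite{ShulkinVanLimbeek2017} --- so what you are attempting is a from-scratch proof of the main theorem of that paper. Your first two paragraphs are essentially sound. The closed/dense dichotomy via intersection cardinality is correct (one gloss: since you are not told that $f^{-1}$ is geodesic-preserving, ``manifestly preserved'' needs a word for the dense case; e.g.\ a dense geodesic meets every closed geodesic infinitely often, so if its image were closed it would have infinite intersection with, hence equal, the images of two \emph{distinct} closed geodesics, contradicting injectivity). The claim that a $|\det|$-preserving permutation of primitive directions comes from $\mathrm{GL}_2(\Z)$ does check out, and the slope-$1$ computation correctly produces an additive bijection $H$ of $\R/\Z$ with $h = H + h(0)$, $g = H + g(0)$; that step is rigorous precisely because slope $1$ is an integer, so ``$y - x = c$'' is an honest equation in $\R/\Z$.

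The gap is your final step, and it is not ``bookkeeping'' --- it is the entire mathematical content of the cited paper. The condition you write, that ``$H(\alpha t)$ is a fixed multiple of $H(t)$,'' is not even well-formed: multiplication by an irrational $\beta$ does not descend to $\R/\Z$. What preservation of the slope-$\alpha$ geodesic through the origin actually yields is a congruence modulo a \emph{dense} subgroup: there is an irrational $\beta$ (the slope of the image geodesic) such that for all $t$,
\[
H(\alpha t) \;\equiv\; \beta\,\widetilde{H}(t) \pmod{\beta\Z + \Z},
\]
where $\widetilde{H}(t)$ is any lift of $H(t)$ to $\R$. Since $\beta\Z+\Z$ is dense in $\R$, this is dramatically weaker than an identity in $\R/\Z$, and ``solving'' it is exactly the problem of excluding the wild Hamel-basis automorphisms that you yourself identify as the obstacle; additivity gives no relation at all between $H(t)$ and $H(\alpha t)$ for irrational $\alpha$, because $t$ and $\alpha t$ are rationally independent for generic $t$. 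Extracting concrete information from the set-equality $f(G_\alpha)=G_\beta$ (for instance, intersecting with the coordinate circles gives $H(\langle\alpha\rangle)=\langle\beta\rangle$ and $H(\langle 1/\alpha\rangle)=\langle 1/\beta\rangle$, i.e.\ $H(\alpha)\equiv\pm\beta$ and $H(1/\alpha)\equiv\pm 1/\beta \pmod 1$) produces a mod-$1$ analogue of the classical ``additive plus inversion-preserving implies linear'' theorem --- but the unknown integer lifts and signs block the classical proof, which relies on order and positivity, neither of which survives reduction mod $1$. As written, your proposal asserts the conclusion at precisely the crux; Shulkin and van Limbeek spend the bulk of their paper closing this step, so it cannot be waved through in two sentences.
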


Since $f_\star$ is an affine map of the torus, and it fixes the horizontal and vertical geodesics of the torus (defined as the projections of horizontal and vertical geodesics of the cylinder), we must have that $f_\star$ is a translation of the torus possibly composed with a reflection. Furthermore, since $f_\star$ fixes the projection of $v$ setwise and a point on $v$, it follows $f_\star$ is the identity or a reflection. We thus have the following.

\begin{lemma} \label{almostfixed}
After possibly composing $f$ with the reflection that pointwise fixes $v$, $f$ fixes the cylinder pointwise up to the action of $\Z$ on the $\R$ factor. 
\end{lemma}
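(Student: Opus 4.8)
The plan is to read the conclusion directly off the structure of $f_\star$ established in the preceding paragraph. Write $q\colon \Sp^1\times\R \to \Sp^1\times(\R/\Z)$ for the quotient collapsing the $\R$-factor modulo $\Z$, so that $q(\theta,r)=(\theta,\,r \bmod 1)$ and $q\circ f = f_\star\circ q$ by the definition of $f_\star$. The key observation is that ``$f$ fixes the cylinder pointwise up to the $\Z$-action on the $\R$-factor'' is \emph{exactly} the statement $q\circ f = q$, that is, $f_\star=\mathrm{id}$: indeed if $f_\star=\mathrm{id}$ then each point $(\theta,r)$ has $f(\theta,r)=(\theta,r')$ with $r'\equiv r\pmod 1$, and hence $f(\theta,r)=(\theta,r+k)$ for some integer $k$ depending on the point. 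So it suffices to arrange $f_\star=\mathrm{id}$.

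We already know from the preceding discussion that $f_\star$ is either the identity or a reflection. First I would identify which reflection can occur. Since $f_\star$ fixes the projection of $v$ setwise and a point on it, a reflection must act across the projection of $v$; moreover it must fix the $\R/\Z$-direction. Indeed, by Lemma~\ref{vfixed} the map $f$ fixes the height-$\tfrac14$ point of $v$ \emph{exactly}, so $f_\star$ fixes its image, whereas an $\R$-reversing reflection would send it to height $-\tfrac14$, and $-\tfrac14\not\equiv\tfrac14\pmod 1$, a contradiction. Hence the only possible reflection is the one across the projection of $v$ that fixes the $\R/\Z$-coordinate, which is precisely the descent $R_\star$ of the cylinder reflection $R$ that fixes $v$ pointwise.

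Finally I would compose away this reflection. If $f_\star=\mathrm{id}$ there is nothing to do. Otherwise $f_\star$ is the reflection $R_\star$ just identified; I replace $f$ by $R\circ f$. Since $R$ is an isometry fixing $v$ pointwise, the normalizations of Remark~\ref{rmk} (fixing $0$, $1$, and $v$ setwise) persist, and $(R\circ f)_\star = R_\star\circ f_\star = R_\star^2 = \mathrm{id}$. The translation step of the first paragraph then yields the lemma.

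The only real subtlety is the middle step: verifying that the reflection is the one fixing the $\R$-direction, so that composing with the reflection $R$ across $v$ genuinely cancels it rather than introducing a new discrepancy. This is exactly where the \emph{exact} (not merely mod-$\Z$) fixing of dyadic-height points on $v$ from Lemma~\ref{vfixed} is essential; everything else is a direct translation between $f_\star=\mathrm{id}$ and the quotient relation $q\circ f=q$.
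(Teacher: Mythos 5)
Your proposal is correct and follows essentially the same route as the paper, whose ``proof'' of this lemma is precisely the paragraph preceding it: descend to the torus, invoke Limbeek--Shulkin to get that $f_\star$ is affine, use the normalizations of Remark~\ref{rmk} to conclude that $f_\star$ is the identity or a reflection, and compose the reflection away. Your middle step --- using the exactly-fixed height-$\tfrac14$ point from Lemma~\ref{vfixed} to rule out the $\R/\Z$-reversing reflections --- is a detail the paper glosses over (it says only ``the identity or a reflection''), and it is genuinely needed: composing with the reflection that pointwise fixes $v$ would not cancel a height-reversing reflection, so making this exclusion explicit completes the argument.
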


Since horizontal geodesics are taken to horizontal geodesics by Lemma~\ref{spherehoriz}, and we are assuming $f$ fixes some points at height $\frac{n}{2^m}$ for integer $n,m$ by Lemma~\ref{vfixed}, Lemma~\ref{almostfixed} implies that any point of height $\frac{n}{2^m}$ is fixed. Thus we have the following

\begin{lemma} \label{geofixed}
  After possibly composing $f$ with the reflection that pointwise fixes $v$, $f$ setwise fixes every geodesic. 
\end{lemma}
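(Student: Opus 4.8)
The plan is to first upgrade the conclusion of Lemma~\ref{almostfixed} from ``fixed up to the $\Z$-action'' to genuine fixed points at dyadic heights, and then to exploit these fixed points to pin down each class of geodesic. Recall from Lemma~\ref{almostfixed} that, after the reflection, $f$ sends every point to a point with the same $\Sp^1$-coordinate and with height differing by an integer; write $f(c,h) = (c,\, h + k(c,h))$ with $k$ integer-valued.

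First I would show that every point of dyadic height $\tfrac{n}{2^m}$ is fixed. The horizontal geodesic $H = \Sp^1 \times \{\tfrac{n}{2^m}\}$ meets $v$ in a point of dyadic height, which is fixed by Lemma~\ref{vfixed}. Since $f(H)$ is again horizontal by Lemma~\ref{spherehoriz} and contains this fixed point, we get $f(H) = H$. Then for any $(c, \tfrac{n}{2^m}) \in H$ its image lies on $H$ and retains the $\Sp^1$-coordinate $c$ by Lemma~\ref{almostfixed}, so the point is fixed.

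Next I would fix the vertical and slant geodesics using density. A vertical or slant geodesic $\gamma$ attains every height, so it contains a set of dyadic-height points that is dense in $\gamma$, all of which are now fixed. Since $f(\gamma)$ is a geodesic of the same class (vertical maps to vertical and slant maps to slant by the earlier observation preceding Lemma~\ref{vfixed}) and is closed, it contains the closure of this dense set, hence all of $\gamma$. As $\gamma$ and $f(\gamma)$ are then complete geodesics with $\gamma \subseteq f(\gamma)$, they share a segment and therefore a common velocity at a common point, so by uniqueness of geodesics $f(\gamma) = \gamma$.

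The remaining, and genuinely delicate, case is a horizontal geodesic $H_r$ at a non-dyadic height, which contains no fixed points a priori, so the density argument is unavailable. By Lemma~\ref{spherehoriz} and Lemma~\ref{almostfixed}, $f(H_r) = H_{r+k}$ for a single integer $k$, and I must rule out $k \neq 0$. I would choose a slant geodesic $\gamma$ of irrational slope $s$; it is fixed by the previous step and crosses $H_r$ in a unique point $p$. Then $f(p) \in \gamma \cap H_{r+k}$, the unique point of $\gamma$ at height $r+k$, whose $\Sp^1$-coordinate differs from that of $p$ by $k/s \pmod 1$. But Lemma~\ref{almostfixed} forces $f(p)$ to share the $\Sp^1$-coordinate of $p$, so $k/s \equiv 0 \pmod 1$, which is impossible for irrational $s$ unless $k = 0$. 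Hence $f(H_r) = H_r$, and combining the three cases shows $f$ setwise fixes every geodesic. The main obstacle is precisely this non-dyadic horizontal case, where one must play the integer height-shift permitted by Lemma~\ref{almostfixed} against the irrational circle-shift along a transverse slant geodesic.
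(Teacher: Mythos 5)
Your proof is correct, and its core matches the paper's argument: you first promote the fixed points on $v$ at dyadic heights (Lemma~\ref{vfixed}) to fixed points at \emph{all} dyadic-height points of the cylinder via Lemma~\ref{spherehoriz} and Lemma~\ref{almostfixed}, and then pin down each vertical and slant geodesic because it passes through a dense set of these fixed points and is the unique geodesic doing so --- this is exactly the paper's one-line proof. Where you genuinely go beyond the paper is the non-dyadic horizontal case. The paper's proof simply asserts that ``horizontal geodesics are already fixed,'' but at that point in the text this has only been justified for horizontals at dyadic heights (those containing a fixed point of $v$); for a horizontal $H_r$ with $r$ non-dyadic, Lemmas~\ref{spherehoriz} and~\ref{almostfixed} only give $f(H_r)=H_{r+k}$ for some integer $k$. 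Your argument --- intersecting $H_r$ with an already-fixed slant geodesic of irrational slope $s$ and playing the integer height shift $k$ against the circle shift $k/s \bmod 1$ forced by Lemma~\ref{almostfixed} --- correctly rules out $k\neq 0$ and closes this gap, so the lemma as stated (``every geodesic'') is fully proved. It is worth noting the gap is harmless for the paper's end goal: the proof of Theorem~\ref{prop: cylinder2} only intersects geodesics of nonzero slope through a point, so it never needs the horizontal case of the lemma; once all points are fixed, horizontals are fixed trivially. Still, your version is the one that actually establishes the lemma in the generality in which it is stated.
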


\begin{proof} Let $l$ be a geodesic of slope $\alpha \ne 0$ since horizontal geodesics are already fixed. Since $l$ intersects points of heights $\frac{n}{2^m}$ for all $n$ and $m$ and these points are pointwise fixed by Lemmas ~\ref{vfixed} and~\ref{almostfixed}, we are done since $l$ is the only geodesic that intersects all of these points.
\end{proof}

We now give the proof of the main theorem for this section.

\begin{proof}[Proof of Theorem~\ref{prop: cylinder2}]
Let $f$ be any geodesic-preserving bijection. After possibly composing $f$ with additional geodesic-preserving bijections, we have the assumptions of Lemma~\ref{geofixed}. Let $l_\alpha$ be the line of slope $\alpha$ through an arbitrary point $x$. Since $f(x) \in f(l_\alpha) = l_\alpha$ for all $\alpha$ by Lemma~\ref{geofixed}, we must have that $f(x) = x$. 
\end{proof}

\subsection{\texorpdfstring{Proof of the $\SxR$ case}{Proof of the S2 times R case}} \label{sphereproof}
Now we show that a geodesic-preserving bijection of $\SxR$ is an isometry composed with an affine map in the $\R$-coordinate. Note there are no twisting maps in this case -- a similarly defined map that rotates the sphere fibers would send a slant geodesic to a curve whose projection is a small circle (a circle of the sphere that is not a geodesic). 

\begin{theorem}\label{sphere}
    Let $f: \Sp^2 \times \R \to \Sp^2 \times \R$ be geodesic-preserving bijection, then $f$ is an isometry of $\Sp^2 \times \R$ composed with an affine transformation on the real line component.
\end{theorem}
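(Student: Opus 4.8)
The plan is to mirror the $\HxR$ argument. Since Proposition~\ref{prop: fundlemma} is unavailable for $\SxR$, I would obtain preservation of the two families of totally geodesic subsurfaces---horizontal spheres and vertical cylinders---directly from the incidence patterns of geodesics, then restrict $f$ to these subsurfaces and invoke the classifications for $\Sp^2$ (Jeffers \cite{Jeffers}) and for the cylinder (Theorem~\ref{prop: cylinder2}), and finally assemble the local data into a global isometry composed with an affine map of $\R$.

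I would begin with the horizontal spheres. As recorded in the Geodesics paragraph, two points are joined by a \emph{unique} geodesic exactly when they lie in a common sphere $\Sp^2 \times \{r\}$ and are non-antipodal, while in every other case they are joined by infinitely many geodesics. Since a geodesic-preserving bijection preserves the number of geodesics joining a pair of points, this shows, exactly as in Lemma~\ref{spherehoriz}, that $f$ carries horizontal geodesics to horizontal geodesics. Because a horizontal sphere is the union of all horizontal geodesics through any one of its points (every point of a round sphere lies on a great circle through a fixed basepoint), $f$ maps each horizontal sphere onto a horizontal sphere. Hence there is a height bijection $\sigma \colon \R \to \R$ with $f(\Sp^2 \times \{s\}) = \Sp^2 \times \{\sigma(s)\}$, and on each sphere $f$ restricts to a geodesic-preserving bijection of $\Sp^2$, so to an isometry $\phi_s \in \Isom(\Sp^2)$ by \cite{Jeffers}. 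Writing $f(a,s) = (\phi_s(a), \sigma(s))$ reduces the theorem to showing that $\phi_s$ is independent of $s$ and that $\sigma$ is affine.

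Next I would show that vertical cylinders are preserved. I expect to argue via guaranteed sets and intersection patterns, as in the cylinder case: two points on a common vertical geodesic have a guaranteed set consisting of evenly spaced points lying on that vertical geodesic, and one can use such configurations together with the preserved dichotomy between ``unique'' and ``infinitely many'' connecting geodesics to show that the relation ``lie on a common vertical cylinder'' is preserved, so that $f$ carries each cylinder $\gamma \times \R$ onto a cylinder. Restricting $f$ to a cylinder then gives a geodesic-preserving bijection of $\Sp^1 \times \R$, which by Theorem~\ref{prop: cylinder2} is a product of an isometry, an affine map in the $\R$-coordinate, and a twisting map.

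The main obstacle is ruling out the twisting maps, equivalently showing $\phi_s$ is independent of $s$ (equivalently, that $f$ carries vertical geodesics to vertical geodesics rather than to slant geodesics). This is exactly where $\SxR$ differs from the bare cylinder. A nontrivial twist on the cylinder $\gamma \times \R$ forces $\phi_s$, as a \emph{global} isometry of $\Sp^2$, to be a rotation about the axis of $\gamma$ through an angle growing with $s$; such a $\phi_s$ fixes $\gamma$ setwise but moves every other great circle, so it fails to preserve the other vertical cylinders, contradicting the preceding step. Phrased on geodesics, this is the observation that rotating the sphere fibers sends a slant geodesic---whose projection to $\Sp^2$ is a great circle traversed at constant speed---to a curve whose projection is a small circle, which is not a geodesic. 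Hence every twist is trivial and $\phi_s$ fixes every great circle setwise, so $\phi_s$ is a single isometry $\Phi \in \Isom(\Sp^2)$ independent of $s$. With vertical geodesics now preserved, $f$ preserves all three classes of geodesics, and the affine (twist-free) structure on a single cylinder forces $\sigma$ to be affine, as in Lemma~\ref{spherehoriz} and Remark~\ref{rmk}. Combining, $f(a,s) = (\Phi(a), \sigma(s))$ with $\Phi \in \Isom(\Sp^2)$ and $\sigma \in \Aff(\R)$, which is the desired form.
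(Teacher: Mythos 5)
Your overall architecture (horizontals preserved by counting connecting geodesics, hence spheres preserved; cylinders preserved; Theorem~\ref{prop: cylinder2} to rule out twists; assemble) mirrors the paper's proof, but there is a genuine gap at the step where you pass from ``$f$ restricts to a geodesic-preserving bijection of each horizontal sphere'' to ``$\phi_s \in \Isom(\Sp^2)$ by \cite{Jeffers}.'' Jeffers' classification for the sphere does \emph{not} say that a geodesic-preserving bijection of $\Sp^2$ is an isometry: since every great circle is invariant under the antipodal map, composing an isometry with a bijection that swaps an \emph{arbitrary} subset of points with their antipodes still preserves great circles as sets, and these wild, discontinuous maps are part of the classification. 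The paper quotes Jeffers in exactly this form (``an isometry composed with a bijection swapping some set of points with their antipodal points''). Consequently your decomposition $f(a,s) = (\phi_s(a), \sigma(s))$ with $\phi_s \in \Isom(\Sp^2)$ is unjustified, and your argument ruling out twists collapses with it, since it treats $\phi_s$ as a global isometry (a rotation about the axis of $\gamma$ with angle depending on $s$). Ruling out these antipodal swaps is precisely the substantive content of the paper's endgame: the paper first proves verticals go to verticals by an intersection-pattern dichotomy (a vertical meets a slant infinitely often or not at all, whereas two slants can meet exactly once), normalizes $f$ by an isometry so that each vertical cylinder is setwise fixed, applies Theorem~\ref{prop: cylinder2} on each cylinder --- where the twist is excluded because verticals are preserved, and, crucially, a swap of only \emph{some} antipodal pairs of verticals within a cylinder is not of the isometry-composed-with-affine form --- and then fixes a point and matches the affine maps across cylinders using preservation of spheres.

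A secondary weakness: your cylinder-preservation step is only a sketch (``I expect to argue via guaranteed sets\dots''). The guaranteed-set fact you invoke is correct ($G$ of two points on a common vertical is an evenly spaced subset of that vertical), but the argument is never carried out, and as noted above your later steps lean on this one. The paper avoids the issue by establishing preservation of vertical geodesics directly from the intersection dichotomy, after which cylinders are preserved automatically, being unions of verticals over great circles. The cylinder step is reparable along those lines; the antipodal-swap gap, as written, invalidates the key reduction of your proof.
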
 

We first show that a geodesic-preserving bijection preserves horizontal, vertical, and slant geodesics.

\begin{lemma}
    Let $f: \SxR \rightarrow \SxR$ be a geodesic-preserving bijection. Then $f$ preserves classes of geodesics.
\end{lemma}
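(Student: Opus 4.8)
The plan is to classify the three types of geodesics in $\SxR$ by their intersection behavior, which is preserved under any geodesic-preserving bijection $f$. The key structural fact about $\SxR$ geodesics is the dichotomy in uniqueness of connecting geodesics: two points lie on a common horizontal sphere precisely when they are joined by a \emph{unique} geodesic, whereas any two points at different heights are joined by infinitely many slant geodesics (spiraling around a great circle different numbers of times). This uniqueness-vs-multiplicity distinction is an incidence property invariant under $f$, so it should let me separate horizontal geodesics from the rest. Indeed, the argument of Lemma~\ref{spherehoriz} from the cylinder case transfers verbatim: if $f$ sent a vertical or slant geodesic $\gamma$ to a horizontal one, then two distinct points of $\gamma$ (joined by infinitely many geodesics) would map to two points joined by a unique geodesic, a contradiction. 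So I would first establish that $f$ takes horizontal geodesics to horizontal geodesics, and $f^{-1}$ does as well.

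With horizontal geodesics pinned down, the next step is to distinguish vertical from slant geodesics. The natural invariant is the projection to the $\Sp^2$ factor together with the fiber structure. A vertical geodesic $\{p\}\times\R$ is characterized as a geodesic meeting each horizontal sphere in exactly one point, while a slant geodesic meets each horizontal sphere in infinitely many points (as it spirals). Since $f$ preserves horizontal spheres setwise --- this follows once we know horizontal geodesics are preserved, because a horizontal sphere is determined as the union of the horizontal geodesics it contains, or equivalently via the totally geodesic sphere structure --- the number of intersection points of a geodesic with a fixed sphere is an invariant. Hence $f$ cannot mix vertical and slant geodesics, and preservation of all three classes follows. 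Alternatively, I would use that vertical geodesics are exactly the geodesics disjoint from every slant geodesic sharing no endpoint, or that they are the unique geodesics admitting no second geodesic through a shared pair of distinct points but with unbounded multiplicity of the complementary type; whichever framing is cleanest.

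The main obstacle I anticipate is making rigorous the claim that $f$ preserves horizontal spheres, since Proposition~\ref{prop: fundlemma} does \emph{not} apply to $\SxR$ and we have deliberately avoided classifying its totally geodesic subsets. I would therefore route around any appeal to totally geodesic subsets and argue purely through geodesic incidence: a horizontal sphere $\Sp^2\times\{r\}$ is the set of all points $q$ such that $q$ and a fixed point $p\in\Sp^2\times\{r\}$ are joined by a unique geodesic (equivalently, lie on a common horizontal geodesic). Because $f$ preserves the unique-connection relation (shown in the first step) and preserves horizontality of geodesics, it maps each horizontal sphere into a horizontal sphere, and by applying the same to $f^{-1}$ we get a bijection on the set of spheres. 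Once spheres, and thus heights, are organized, the slant/vertical dichotomy via intersection cardinality closes the lemma. I expect the bookkeeping of signs and the precise characterization of the unique-connection relation to require care, but no deep new idea beyond the incidence geometry already exploited in the cylinder case.
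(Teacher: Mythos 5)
Your first step (horizontal geodesics are preserved) is essentially sound, and it is in fact a different route from the paper's: the paper characterizes horizontals as the only geodesics that can intersect another geodesic \emph{exactly twice}, while you transfer the unique-connection argument from the cylinder. One caveat if you go your way: the dichotomy ``same height $\Rightarrow$ unique connecting geodesic'' fails for antipodal pairs, since two antipodal points of $\Sp^2 \times \{r\}$ lie on infinitely many horizontal geodesics. So you must choose your two points on $\gamma$ so that their images are not antipodal (e.g.\ take three points of $\gamma$ at pairwise distinct heights; at most one pair of their images can be antipodal).

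The genuine gap is in your second step. Your key claim --- that a slant geodesic ``meets each horizontal sphere in infinitely many points (as it spirals)'' --- is false. A slant geodesic has constant nonzero vertical speed, so its height function is strictly monotone and it meets every horizontal sphere $\Sp^2 \times \{r\}$ in \emph{exactly one} point, just like a vertical geodesic. The spiraling happens only in the projection to $\Sp^2$ (the projection wraps around a great circle infinitely often), not in the intersection with spheres. Consequently ``number of intersection points with a fixed horizontal sphere'' cannot separate verticals from slants, and your whole second step collapses; preserving horizontal spheres (which you can indeed arrange, with some care) buys you nothing here. Your fallback characterizations do not rescue it either: a vertical geodesic over a point $p$ lying on a slant's projection circle meets that slant infinitely often (once per revolution), so verticals are \emph{not} disjoint from all slants, and the ``no second geodesic through a shared pair of points'' property fails for verticals, since any two points at different heights are joined by infinitely many slant geodesics. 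The invariant that actually works --- and is what the paper uses --- is intersection cardinality between geodesics: a vertical meets any other geodesic either once (necessarily a horizontal), infinitely often, or not at all, whereas two slant geodesics can meet \emph{exactly once} (start them at the same point and give one rational and the other irrational slope). Since horizontals are already pinned down by your first step, ``admits a non-horizontal geodesic meeting it exactly once'' holds for slants and fails for verticals, and this property is preserved by $f$ and $f^{-1}$; that closes the lemma.
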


\begin{proof}
    Firstly, $f$ preserves horizontal geodesics since they are the only geodesics that can intersect another geodesic exactly twice.
    
    Now observe that a slant geodesic and a vertical geodesic intersect infinitely often or are disjoint. However, two distinct slant geodesics may intersect exactly once by choosing them to start at the same point and then giving one rational and the other irrational slope. Here slope refers to the vertical displacement the slant geodesic makes after tracing out a great circle once in the projection. The lemma now follows.
\end{proof}

Now we combine this with the cylinder result for the final proof.

\begin{proof} [Proof of Theorem \ref{sphere}]
    Since $f$ preserves vertical geodesics, the action of $f$ on the verticals determines a bijection $f_\star: \Sp^2 \rightarrow \Sp^2$, and $f_\star$ is geodesic-preserving since horizontals are preserved. Since $f$ preserves horizontal geodesics, it also preserves horizontal spheres. By the classification of geodesic-preserving bijections for spheres \cite{Jeffers}, we have that $f_\star$ is an isometry composed with a bijection swapping some set of points with their antipodal points. Composing $f$ with an isometry we can force $f$ to setwise fix a chosen sphere $S$, and additionally we can assume $f_\star$ fixes all geodesics of $S$ setwise. 
    
  Using that $f$ preserves verticals, note every cylinder is setwise fixed by $f$. Now $f$ restricts to a geodesic-preserving bijection on each cylinder, so we can appeal to the classification of these bijections in Theorem~\ref{prop: cylinder2}. The twisting maps take vertical to slant geodesics in the cylinder, but we have that verticals are preserved, so $f$ restricts to an isometry composed with an affine map in the $\R$-coordinate. 

    Composing $f$ with another isometry if needed we have $f(x) = x$ for some $x \in S$. Now the vertical line through $x$ is fixed, so $f$ restricted to any cylinder containing $x$ is an affine map in the $\R$-coordinate. Since spheres are preserved, this affine map is the same for every cylinder, so $f$ acts as an affine map in the $\R$-coordinate on the entirety of $\SxR$. We are now done. \end{proof}

\section{\texorpdfstring{Geodesic-preserving bijections of $\sltilde$}{Geodesic-preserving bijections of the universal cover of SL2R}} \label{sec: sltilde}

 Recall $\sltilde$ is the universal cover of the Lie group $\slg$. We equip $\slg$ with the left-invariant Riemannian metric with respect to the Lie group structure, and we equip $\sltilde$ with the pullback metric. Our goal in this section is to prove the following.

\begin{theorem} \label{sl}
    Let $f: \sltilde \rightarrow \sltilde$ be a geodesic-preserving bijection. Then $f$ is an isometry.
\end{theorem}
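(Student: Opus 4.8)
The strategy, as signaled in the outline, is to exploit the fact that $\sltilde$ is a line bundle over $\Hyp^2$ and that the projection of a geodesic to $\Hyp^2$ is either a point or a curve of constant geodesic curvature. Concretely, let $\pi: \sltilde \to \Hyp^2$ denote the bundle projection, and let $V$ denote the collection of vertical geodesics, i.e.\ the fibers $\pi^{-1}(q)$ that happen to be geodesics (these project to points). The plan is to first show that a geodesic-preserving bijection $f$ preserves this vertical foliation, so that it descends to a well-defined bijection $f_\star: \Hyp^2 \to \Hyp^2$, and then to verify that $f_\star$ preserves the set of constant curvature curves. Once this is established, Proposition~\ref{constantcurvature} forces $f_\star$ to be an isometry, and the remaining work is to promote this downstairs isometry, together with the bundle structure, back up to an isometry of $\sltilde$.

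\medskip\noindent\textbf{Step 1: $f$ preserves vertical geodesics.} I would first give a geometric characterization of the vertical geodesics (fibers) that is phrased purely in terms of incidence of geodesics, so that it is automatically preserved by $f$. The most likely route is to distinguish vertical geodesics from the other geodesics by their intersection pattern with nearby geodesics: because the fibers foliate $\sltilde$ and project to single points, a vertical geodesic should be separable from slant geodesics using how many times geodesics through two given points can meet, analogous to the counting arguments used in the $\SxR$ and cylinder cases. Establishing this characterization lets me conclude $f(V) = V$, so $f$ induces a set map $f_\star$ on the quotient $\Hyp^2 = \sltilde / {\sim}$, where $x \sim y$ iff they lie on a common vertical geodesic; $f_\star$ is a bijection because $f$ is.

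\medskip\noindent\textbf{Step 2: $f_\star$ preserves constant curvature curves.} The non-vertical geodesics of $\sltilde$ project to constant curvature curves in $\Hyp^2$, and I would argue that every constant curvature curve arises this way, so that the family of such curves is exactly $\{\pi(\gamma) : \gamma \text{ a non-vertical geodesic}\}$. Since $f$ permutes the non-vertical geodesics (they are the complement of $V$) and commutes with $\pi$ up to the identification defining $f_\star$, it follows that $f_\star$ sends constant curvature curves to constant curvature curves. Applying Proposition~\ref{constantcurvature}, we obtain that $f_\star$ is an isometry of $\Hyp^2$.

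\medskip\noindent\textbf{Step 3: lift back to $\sltilde$.} Having that $f_\star \in \Isom(\Hyp^2)$, I would compose $f$ with an isometry of $\sltilde$ covering $f_\star^{-1}$ (such a lift exists because isometries of the base lift to bundle isometries of $\sltilde$), reducing to the case $f_\star = \mathrm{id}$. Then $f$ fixes every fiber setwise and must act on each fiber $\R$ in a way compatible with the geodesic-preserving condition; comparing the action on different fibers through the non-vertical geodesics connecting them should pin down $f$ as a single fiber translation together with possibly the reflection, i.e.\ an isometry. I expect the \emph{main obstacle} to be Step~1: rigorously separating vertical geodesics from slant geodesics by incidence alone, since unlike the product geometries $\sltilde$ is genuinely twisted and the geodesic behavior is more delicate; a secondary difficulty is the fiberwise rigidity in Step~3, where one must rule out nonaffine fiber maps by leveraging how non-vertical geodesics synchronize the $\R$-coordinate across fibers.
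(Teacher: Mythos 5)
Your architecture is exactly the paper's: show the vertical geodesics (fibers) are preserved, descend to $f_\star$ on $\Hyp^2$, check that $f_\star$ preserves constant curvature curves, invoke Proposition~\ref{constantcurvature}, and then lift the conclusion back upstairs. Step 2 is fine as sketched (the projections of horizontal and slant geodesics give all constant curvature curves, geodesics included). But the two places you flag as obstacles are left genuinely open, and they are where the content lies. For Step 1, a crude count of intersections will not suffice, because $\sltilde$ has three kinds of slant geodesics with different behavior: a vertical geodesic \emph{does} meet horizontal geodesics, and slant geodesics with horocycle or hypercycle projection, exactly once. The paper's resolution is a two-stage argument you would need to reproduce: first, the geodesics that can meet another geodesic infinitely often are exactly the verticals and the slants with circle projection (everything else is excluded by looking at projections), so this class is $f$-invariant; second, within this class, a slant can meet another member of the class exactly once, whereas a vertical meets every member of the class either not at all or infinitely often. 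The "exactly once" test must be run only against the infinite-intersection class, or it fails.

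The more serious gap is Step 3. After reducing to $f_\star = \mathrm{id}$ and $f(x) = x$ for a chosen basepoint, your proposed synchronization via non-vertical geodesics only propagates information along horizontal planes: since $f$ preserves fibers and preserves the class of horizontal geodesics, one gets that $f$ pointwise fixes every horizontal geodesic through $x$ (each of its points is the unique intersection with a fiber, and both the fiber and the horizontal geodesic are setwise fixed) — but the union of these is the horizontal plane through $x$, a proper two-dimensional subset of $\sltilde$, so you are not done. The missing idea is holonomy: horizontal planes are not totally geodesic, and the horizontal lift of a closed geodesic polygon in $\Hyp^2$ fails to close up by a vertical displacement equal to the area of the polygon. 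This is precisely what lets the paper connect any two points of $\sltilde$ by a finite chain of horizontal geodesic segments (travel horizontally to the correct fiber, then realize the remaining vertical displacement by lifting a polygon of the appropriate area), after which induction along the chain gives $f = \mathrm{id}$ everywhere. Without this non-integrability input, fiberwise rigidity does not follow from synchronization alone. A small further correction: once $f_\star = \mathrm{id}$, the residual isometries are only the winding maps $\R$; there is no fiber-reflection covering the identity, since fiber-reversing isometries of $\sltilde$ cover orientation-reversing isometries of $\Hyp^2$.
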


 \subsection{Background}  We first give some background on the geometry, its geodesics, and its isometry group mostly following the manuscript of Scott \cite{Scott}.
 
 \p{Sasaki metric} It is known that the left-invariant Lie group metric on $\pslg$ is isometric to the Sasaki metric on $\ut$, the unit tangent bundle of $\Hyp^2$. The Sasaki metric is the natural metric such that the projection to $\Hyp^2$ is a Riemannian submersion, and the metric restricted to a fiber over a point is the standard Euclidean metric on the circle. Since $\slg$ double covers $\pslg$, the universal cover of $\pslg$ is also $\sltilde$. Thus, the pullback of the Sasaki metric gives a metric on $\uttilde$ that is isometric to the metric on $\sltilde$. 
 
 Throughout the section, we  identify $\sltilde$ with $\uttilde$. One can think of points in $\sltilde$ as based unit vectors with an additional winding number measuring how many times a vector has made a full rotation.

\p{Lifting from $\Hyp^2$ with parallel transport} Let $p: \sltilde \rightarrow \Hyp^2$ denote the projection map. There are natural maps $t_x: \Hyp^2 \rightarrow \sltilde$ for a given basepoint $x \in \sltilde$ given by parallel transport of unit vectors. Towards defining these maps, let $y \in \Hyp^2$ and let $\gamma$ be the geodesic from $p(x)$ to $y$. We then define $t_x(y)$ as the unit vector we arrive at after parallel transport of $x$ along $\gamma$ to $y$. We refer to these embeddings of $\Hyp^2$ as \textit{horizontal planes}, and we call a geodesic \textit{horizontal} when it lies inside a horizontal plane. Note any geodesic $\gamma$ of $\Hyp^2$ along with a given $x$ in $\sltilde$ with projection in $\gamma$ determines a unique horizontal geodesic.

One important warning is that the horizontal planes are not totally geodesic. Consider a geodesic triangle in $\Hyp^2$, and consider the geodesic path that forms a loop traveling around the triangle. We can lift this path to a path of horizontal geodesic segments in $\sltilde$. Since parallel transport of a unit vector along the geodesic triangle causes the vector to return to the original point in a rotated position, the lifted path is not a loop. The exact amount of rotation is called the \textit{holonomy} of the triangle, and it is known to be the area of the triangle ($\pi$ minus the sum of the angles).

 \p{Geodesics} We describe the geodesics of $\sltilde$ using a description of the geodesics of the Sasaki metric from \cite{Nagy1977}. One initial guess is that the geodesics are exactly translates of one-parameter subgroups in the Lie group sense. This is known to hold for Lie groups admitting a \textit{bi-invariant} Riemannian metric; however, the metric on $\sltilde$ is known to be left-invariant, but not right-invariant. We group the geodesics into three classes:  
 
\begin{itemize}
    \item horizontal geodesics contained in a horizontal plane. These project to a geodesic in $\Hyp^2$.
    \item vertical geodesics such that the projection to $\Hyp^2$ is a point. These geodesics are the fibers of the line bundle structure.
    \item slant geodesics such that the projection to $\Hyp^2$ is a curve of non-zero constant curvature so either
    \begin{itemize}
        \item a circle,
        \item a horocycle,
        \item or, a hypercycle.
    \end{itemize}
\end{itemize}

The curves of non-zero constant curvature $K$ in $\Hyp^2$ are exactly circles ($K >1$), horocycles ($K=1$), and the hypercycles ($K<1)$. A slant geodesic given a unit-speed parameterization traces out the curve in the projection at unit speed, and also has a constant vertical speed depending linearly on the curvature of the projection curve. 

Proposition~\ref{prop: fundlemma} applies to $\sltilde$ from the above description of its geodesics. It is known that there are no nontrivial totally geodesic submanifolds in $\sltilde$ (see \cite[Theorem 7.2]{Tsukada1996}), so we have that there are no nontrivial totally geodesic subsets.

\p{Isometries} Note $\sltilde$ has a natural bundle structure over $\Hyp^2$ where the fibers are exactly the vertical geodesics. The isometries of $\sltilde$ preserve this bundle structure, and the group of isometries fits into the following short exact sequence.
\[
\begin{tikzcd}
1 \arrow[r] & \R \arrow[r] & \Isom(\sltilde) \arrow[r] & \Isom(\Hyp^2) \arrow[r] & 1
\end{tikzcd}
\]

The action of $\R$ is via ``winding maps" that rotate each unit vector representing a point $\sltilde$ by a chosen amount. There is also an action of $\Isom(\Hyp^2)$ on $\Isom(\sltilde)$ given by the natural action of isometries on unit vectors, though the exact sequence does not split since a rotation from 0 to $2\pi$ is a loop that lifts to a path of winding maps from 0 to $2\pi$, and the $2\pi$ winding map is nontrivial in $\Isom(\sltilde)$. 

The stabilizer of a point is known to be the orthogonal group $O(2)$, so we can refer to rotations in $\sltilde$ about a given point. These rotations can be thought of as a composition of an isometry induced by a rotation of $\Hyp^2$ with a winding map that reverses the winding on the fixed point caused by the rotation.

There are two components for $\Isom(\sltilde)$ corresponding to orientation-reversing and orientation-preserving isometries of $\Hyp^2$. Interestingly, an isometry of $\sltilde$ induced by an orientation-reversing isometry of $\Hyp^2$ reverses the orientation of the fibers, so every isometry of $\sltilde$ is orientation-preserving.

\subsection{Constant curvature curves} One of our main tools in this case is Proposition~\ref{constantcurvature} involving bijections that preserve constant curvature curves of $\Hyp^2$. The proof uses a simple observation essentially about tangent lines to a smooth immersed curve in $\R^2$ -- if we draw a tangent line to uncountably many points in the curve, then some tangent lines must intersect. More precisely,

\begin{proposition} \label{tangent}
    Suppose $\gamma$ is a smooth immersed curve in $\R^2$, and $\{\gamma_x\}_{x \in I}$ is a family of smooth paths for some uncountable $I \subset \gamma$ such that $\gamma_x \cap \gamma = \{x\}$, $\gamma_x$ does not cross sides of $\gamma$ at $x$, and the intersection occurs in the interior of $\gamma_x$. Then some distinct $\gamma_x$ intersect.
\end{proposition}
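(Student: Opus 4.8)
The plan is to argue by contradiction: assume the paths $\{\gamma_x\}_{x \in I}$ are pairwise disjoint and exhibit two of them that must cross. The first step is a local reduction. Since $\gamma$ is the image of a smooth immersion of an interval, its domain is covered by countably many open sets on each of which the immersion is an embedding onto an arc that, after a rotation of coordinates, is the graph $y = g(u)$ of a smooth function with non-vertical tangent. Because $I$ is uncountable, one such embedded arc $\alpha$ contains an uncountable subset of $I$; I replace $I$ by this subset and record its points by their distinct $u$-coordinates, obtaining an uncountable set $U$ inside a bounded interval $J$.

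Next I straighten each path into a graph over the same axis. The non-crossing hypothesis forces $\gamma_x$ to be tangent to $\alpha$ at $x$, since a transverse meeting would change sides; hence $\gamma_x$ also has non-vertical tangent at $x$ and, near $x$, is itself the graph $y = \phi_x(u)$ of a smooth function on an interval $(u_x - r_x, u_x + r_x) \subset J$, with $u_x$ interior (this is where the assumption that $x$ lies in the interior of $\gamma_x$ is used). Because $\gamma_x$ meets $\gamma$ only at $x$ and lies on one side of $\alpha$ near $x$, the continuous difference $\phi_x - g$ vanishes only at $u_x$ and has one sign near $u_x$, so by the intermediate value theorem it keeps that sign on the whole interval. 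Passing to an uncountable subset I may assume this sign is always ``$\geq$'', so that $\phi_x \geq g$ on its domain with equality exactly at $u_x$.

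The main obstacle is that the graph-radii $r_x$ may be arbitrarily small, so two nearby tangent points need not have overlapping graph-domains; there is no single scale at which all the local pictures are comparable. I resolve this by a pigeonhole on scale: the sets $\{x \in U : r_x > 1/k\}$ for $k \in \Z_{>0}$ cover $U$, so one of them, with $r_x > \rho_0$ for some fixed $\rho_0 > 0$, is uncountable. Covering the bounded interval $J$ by finitely many subintervals of length less than $\rho_0$, one of them contains two points $u_1 < u_2$ of this uncountable set. Then $u_2 - u_1 < \rho_0 < r_{x_1}, r_{x_2}$, so the interval $[u_1, u_2]$ lies in the graph-domain of both $\phi_{x_1}$ and $\phi_{x_2}$.

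Finally I compare the two graphs by the intermediate value theorem. At the left endpoint one has $\phi_{x_1}(u_1) = g(u_1)$ while $\phi_{x_2}(u_1) > g(u_1)$ since $u_1 \neq u_2$, so $\phi_{x_1}(u_1) < \phi_{x_2}(u_1)$; symmetrically $\phi_{x_1}(u_2) > \phi_{x_2}(u_2)$ at the right endpoint. Thus $\phi_{x_1} - \phi_{x_2}$ changes sign on $[u_1, u_2]$ and vanishes at some $u^\ast$, producing a common point $(u^\ast, \phi_{x_1}(u^\ast))$ of the distinct paths $\gamma_{x_1}$ and $\gamma_{x_2}$. This contradicts disjointness and proves the proposition. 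I expect the scale pigeonhole, together with the bookkeeping needed to write both paths as graphs over a common interval with clean boundary inequalities, to be the only delicate points; once that is in place the crossing is immediate from the sign change.
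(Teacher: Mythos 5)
Your proof is correct, and it takes a genuinely different route from the paper's. The paper argues by contradiction through a point-set topology result: after restricting to uncountably many tangency points lying in a small embedded subarc of $\gamma$ and on a common side, it attaches to each $\gamma_x$ a short arc $\gamma^\prime_x$ orthogonal to $\gamma$ at $x$ on the opposite side, so that a subpath of $\gamma_x$ together with $\gamma^\prime_x$ forms an embedded tripod; an uncountable pairwise disjoint family of tripods in $\R^2$ is impossible by Moore's theorem (Lemma~\ref{tripod}), giving the contradiction. You avoid Moore's theorem entirely: you straighten an embedded subarc of $\gamma$ into a graph $y=g(u)$, use non-crossing plus the single-intersection hypothesis to write each nearby $\gamma_x$ locally as a graph $\phi_x$ with $\phi_x \geq g$ and equality exactly at $u_x$, pigeonhole on the sign and on the graph-radius $r_x$ to find two tangency points closer together than their common scale $\rho_0$, and then exhibit an explicit crossing via the intermediate value theorem, since $\phi_{x_1}-\phi_{x_2}$ changes sign on $[u_1,u_2]$. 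What your approach buys is self-containedness and elementarity --- only the inverse function theorem and IVT are needed --- and it confronts head-on the genuine technical issue (no uniform scale for the local graphs) that your pigeonhole resolves; what the paper's approach buys is brevity, outsourcing all the hard work to a classical theorem. One caveat applies equally to both arguments: ``smooth path'' must be read as regular (immersed) at $x$, since a smooth path with vanishing velocity at $x$ need not be tangent to $\gamma$ there nor locally a graph, and the paper's embedded-tripod construction would face the same difficulty; with that reading, your proof is complete.
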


We give some examples to explain the additional hypotheses on the $\gamma_x$. Consider an embedded curve $\gamma$ and an embedding given by a tubular neighborhood theorem of $I \times I$ into $\R^2$ such that the image of $I \times \{\frac{1}{2}\}$ is a subpath of $\gamma$. The image of curves of the form $y = (x-a)^3$ for varying $a$ gives an uncountable smooth family of pairwise disjoint paths that cross sides of $\gamma$ but are each tangent to $\gamma$. By considering only the positive part of these paths, we see the necessity for assuming the intersection occurs in the interior of the $\gamma_x$. Smoothness of the $\gamma_x$ is also required since otherwise we could use paths that travel down the previous paths and then make a discontinuous turn at $x$ to go back up the same path.

This situation is highly similar to a known result about embedding different objects into $\R^2$. As in the previous paragraph, there exists an uncountable family of pairwise disjoint embeddings of the interval into $\R^2$. On the other hand, let the \textit{tripod} (Moore instead uses the term triod) $Y$ refer to the topological space defined by identifying one endpoint from three intervals. Moore showed the following.

\begin{lemma} \cite{Moore1928Triods} \label{tripod}
Any family of embeddings $Y \rightarrow \R^2$ with pairwise disjoint images is countable.
\end{lemma}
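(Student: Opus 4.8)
The plan is to reprove this classical fact of Moore by encoding each embedded tripod with a finite amount of rational data drawn from a fixed countable pool, and then showing that two \emph{disjoint} tripods can never receive the same data; the resulting injection from the family into a countable set forces the family to be countable. I would fix once and for all a countable basis of $\R^2$ consisting of open disks with rational centers and rational radii, together with the countable dense set $\mathbb{Q}^2$.

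Given an embedded tripod $Y_\alpha$ with junction $v_\alpha$ and arms $A_\alpha^1, A_\alpha^2, A_\alpha^3$, I would first shrink to a basic disk $B_\alpha$ whose interior contains $v_\alpha$ and is small enough that each arm leaves $\overline{B_\alpha}$. The connected component $\tau_\alpha$ of $Y_\alpha \cap \overline{B_\alpha}$ through $v_\alpha$ is then a tree whose three legs reach $\partial B_\alpha$; by the Jordan curve theorem (equivalently, an Euler-characteristic count for the planar graph $\tau_\alpha \cup \partial B_\alpha$, which has $4$ vertices and $6$ edges) this tree cuts $B_\alpha$ into exactly three open sectors. I then pick rational points $q_\alpha^1, q_\alpha^2, q_\alpha^3 \in \mathbb{Q}^2$, one in each sector, and assign to $Y_\alpha$ the packet $\Phi(\alpha) = (B_\alpha, \{q_\alpha^1, q_\alpha^2, q_\alpha^3\})$. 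Only countably many such packets exist.

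The heart of the argument, and the step I expect to be the main obstacle, is the separation claim: if $Y_\alpha$ and $Y_\beta$ are disjoint and $\Phi(\alpha) = \Phi(\beta)$, a contradiction arises. Writing $B$ for the common disk and $q_1, q_2, q_3$ for the common rational points, the interior part $\tau_\beta \cap B$ is connected and contained in $B \setminus \tau_\alpha$, so it lies inside a single sector $S$ of $\tau_\alpha$. At most one of $q_1, q_2, q_3$ lies in $\overline{S}$, so two of them, say $q_2$ and $q_3$, lie in the other two sectors. Removing the closed sector $\overline{S}$ leaves the remaining two sectors joined across the interior of the third leg of $\tau_\alpha$, so $B \setminus \overline{S}$ is connected; hence $q_2$ and $q_3$ can be joined by an arc inside $B \setminus \overline{S} \subseteq B \setminus \tau_\beta$. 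This places $q_2$ and $q_3$ in a single component of $B \setminus \tau_\beta$, contradicting the requirement that the three $q_i$ occupy three distinct sectors of $\tau_\beta$. Thus $\Phi$ is injective on any pairwise disjoint family, which is therefore countable.

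I should flag the one technical point glossed over above: guaranteeing that $\tau_\alpha$ is genuinely a tripod meeting $\partial B_\alpha$ in exactly three points requires choosing $B_\alpha$ small enough to control the initial segments of the three arms before they can wander back, and then verifying the ``exactly three sectors'' count. These are routine planar-topology facts, but they are where the real content sits, so I would isolate them as a preliminary lemma about small disk neighborhoods of the junction before running the encoding.
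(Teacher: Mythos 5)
Your proposal is essentially correct, and it supplies a full proof where the paper gives none: the paper simply cites Moore and sketches the idea in one sentence (assign rational data to each embedded tripod so that disjoint tripods receive distinct data), which is exactly the scheme you carry out. Your encoding by a rational disk plus one rational point in each of the three sectors, and your separation argument --- $\tau_\beta$ minus its boundary endpoints is connected, hence lies in a single sector $S$ of $\tau_\alpha$; the set $B \setminus \overline{S}$ is connected and disjoint from $\tau_\beta$, so two of the three common rational points lie in one component of $B \setminus \tau_\beta$, contradicting that they must represent three distinct sectors of $\tau_\beta$ --- is the standard and correct proof of Moore's theorem. One repair is needed in the setup, at precisely the point you flagged, but the fix is not the one you suggest: no choice of small disk prevents an arm from re-entering $\overline{B_\alpha}$. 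For instance, an arm leaving the junction with radius $t\bigl(2+\sin(1/t)\bigr)$ at parameter $t$ (and slowly rotating angle, to stay embedded) crosses every sufficiently small circle about the junction more than once, so the connected component of $Y_\alpha \cap \overline{B_\alpha}$ through $v_\alpha$ need not be a tripod meeting $\partial B_\alpha$ in exactly three points, no matter how small $B_\alpha$ is. Instead, define $\tau_\alpha$ as the union of the three initial segments of the arms, each truncated at its \emph{first} hitting time of $\partial B_\alpha$ (after choosing $B_\alpha$ small enough that each arm does exit $\overline{B_\alpha}$). These arcs meet $\partial B_\alpha$ only at their endpoints by construction, the resulting tripod cuts $B_\alpha$ into three sectors via the Jordan curve theorem exactly as you describe, and the rest of your argument goes through verbatim: you never needed $\tau_\alpha$ to exhaust $Y_\alpha \cap \overline{B_\alpha}$, only that $\tau_\beta$ minus its endpoints is connected and disjoint from $\tau_\alpha$, which disjointness of $Y_\alpha$ and $Y_\beta$ still guarantees.
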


The proof relies on the countability of the rational points in $\R^2$ -- given a pairwise disjoint family of embeddings of $Y$, one needs to show that each can be assigned a unique rational point in $\R^2$, and then it follows that the family is countable. Assuming Lemma~\ref{tripod}, we show the desired claim.

\begin{proof} [Proof of Proposition~\ref{tangent}]
   Assume by contradiction that the paths $\gamma_x$ are pairwise disjoint. Since there are uncountably many paths, uncountably many of the intersections with $\gamma$ occur in a small embedded subarc of $\gamma$ locally on the same side of $\gamma$. We restrict the set of $\gamma_x$ to just the ones intersecting $\gamma$ in this subarc. Consider a family of pairwise disjoint paths $\{\gamma^\prime_x\}$ such that $\gamma \cap \gamma^\prime_x = \{x\}$ and $\gamma^\prime_x$ is orthogonal to $\gamma$ at $x$. By taking the union of a subpath of $\gamma^\prime_x$ with a subpath of $\gamma_x$, we get a tripod $T_x$ embedded in $\R^2$. By choosing the subpaths of $\gamma^\prime_x$ on sides opposite to $\gamma_x$, we then have an uncountable family of pairwise disjoint tripod spaces, a contradiction by Lemma~\ref{tripod}.
\end{proof}

We are now ready to prove the following. A constant curvature curve is assumed to be complete.

\begin{proposition} \label{constantcurvature}
     Let $f: \Hyp^2 \to \Hyp^2$ be a bijection that preserves constant curvature curves. Then $f$ is an isometry.
\end{proposition}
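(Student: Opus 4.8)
The plan is to show that a bijection $f$ preserving constant curvature curves must in fact preserve \emph{geodesics}, after which we can invoke Jeffers' classification of geodesic-preserving bijections of $\Hyp^2$ to conclude $f$ is an isometry. The central difficulty is that the hypothesis only gives us that $f$ permutes the union of all constant curvature curves (circles, horocycles, hypercycles) — a priori $f$ could mix curves of different curvature types and need not respect geodesics at all. The key idea is to characterize geodesics intrinsically among the family of all constant and zero curvature curves using only incidence data that $f$ must preserve, and this is where Proposition~\ref{tangent} enters.

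First I would set up the basic incidence framework. A bijection preserving the set of constant curvature curves (taken with curvature in some range, presumably $K \neq 0$ together with $K = 0$ for geodesics, or I would first need to recover geodesics) permutes this family, and so preserves intersection patterns: two such curves that meet in $k$ points must map to two curves meeting in $k$ points. The crucial distinguishing feature I would try to exploit is tangency. Given a point $x$ and a direction, there is a one-parameter family of constant curvature curves tangent to that line at $x$ (indexed by curvature $K$), and exactly one of them, the geodesic, is the ``straightest.'' More usefully, geodesics are distinguished by a global separation or non-intersection property: two distinct geodesics in $\Hyp^2$ meet at most once, whereas two circles can meet twice, and circles/horocycles/hypercycles through a common point with a common tangent form families whose mutual intersection behavior differs from that of geodesics. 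The strategy would be to isolate geodesics as precisely those curves $\gamma$ with the property that through each point not on $\gamma$ and each tangent direction there is a controlled (e.g.\ unique or at-most-one-point) intersection behavior, a property phrased purely in terms of the family and hence preserved by $f$.

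The main obstacle — and where Proposition~\ref{tangent} is designed to be used — will be ruling out the pathological scenario in which $f$ ``shears'' tangent families: that is, showing $f$ cannot send an uncountable collection of mutually tangent or mutually non-crossing curves to a collection that violates the tripod-embedding bound of Lemma~\ref{tripod}. Concretely, I expect the argument runs by contradiction: suppose $f$ fails to preserve geodesics at some point, so that it distorts the tangent-line structure along some curve $\gamma$. One then constructs an uncountable family $\{\gamma_x\}_{x \in I}$ of image curves, each touching a fixed smooth curve at exactly one interior point without crossing, which by Proposition~\ref{tangent} must have two members intersecting — contradicting that $f$ preserves the (empty) intersection pattern of the corresponding preimages. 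Establishing that the hypotheses of Proposition~\ref{tangent} (smoothness, one-point interior contact, non-crossing) genuinely hold for the constructed family is the delicate step, since it requires controlling the local geometry of the image curves, which are only known to be constant curvature curves of possibly varying type.

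Once $f$ is shown to preserve geodesics, the proof finishes immediately: $f$ is then a geodesic-preserving bijection of $\Hyp^2$, so by Jeffers' theorem \cite{Jeffers} it is an isometry, which is exactly the claim. I would therefore concentrate the bulk of the work on the incidence-and-tangency characterization of geodesics and on the careful application of Proposition~\ref{tangent}, treating the final appeal to Jeffers as a one-line invocation.
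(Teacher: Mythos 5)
Your meta-strategy---isolate a class of curves by an incidence property that $f$ must preserve, verify the property using Proposition~\ref{tangent}, then finish with a known classification theorem---is exactly the shape of the paper's proof, but you aim it at the wrong class of curves, and that is a genuine gap rather than a cosmetic difference. The mechanism of Proposition~\ref{tangent} (via Moore's Lemma~\ref{tripod}) can only separate \emph{circles} from the other constant curvature curves, because what makes it run is compactness: a constant curvature curve meeting a circle in exactly one point must be tangent to it without crossing (a curve that crossed into the closed disk bounded by the circle would have to cross back out), so a circle admits no uncountable pairwise disjoint family $\{\gamma_x\}$ with $\gamma_x \cap \gamma = \{x\}$. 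Every non-circle member of the family \emph{does} admit such a family: the geodesics perpendicular to a given geodesic, the geodesics ending at the ideal center of a horocycle, or the geodesics perpendicular to the axis of a hypercycle. Since this dichotomy is phrased purely in terms of incidence of members of the family, it shows $f$ preserves circles. It cannot show $f$ preserves geodesics: geodesics, horocycles, and hypercycles all carry pairwise disjoint one-point-intersection families, and because they are non-compact, meeting one of them in a single point does not force a non-crossing tangency, so the hypotheses of Proposition~\ref{tangent} cannot be verified for the contradiction family you propose to build. Nothing in the tripod argument obstructs a bijection from sending a geodesic to a hypercycle. Your fallback characterizations of geodesics also do not work: ``straightest curve'' and tangent directions are not incidence data, and ``two distinct geodesics meet at most once'' is circular as a test for a single curve, while the non-circular version---``meets every member of the family at most once''---is false for geodesics, which meet many circles and hypercycles twice.

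The paper closes exactly this gap by pivoting the endgame: having shown that circles go to circles, it invokes Carath\'eodory's theorem \cite{caratheodory1937} (a circle-preserving bijection of a plane domain is a M\"obius transformation, applied in the Poincar\'e disk model, where such maps are hyperbolic isometries) instead of Jeffers. So the missing idea in your proposal is the switch from ``geodesics plus Jeffers'' to ``circles plus Carath\'eodory''; the central step you defer---an incidence characterization of geodesics among all constant curvature curves---is never supplied and does not follow from the tools you cite. Once Carath\'eodory gives that $f$ is an isometry, preservation of geodesics and of each curvature type follows for free.
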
 

\begin{proof}
   We will show that $f$ preserves circles, and then the proposition follows from a result of Carathéodory \cite{caratheodory1937}. Carathéodory's result shows a circle-preserving bijection on a domain of the plane is a M\"{o}bius transformation, so in particular it applies to the Poincaré model of $\Hyp^2$, and M\"{o}bius transformations are isometries of $\Hyp^2$.
   
 Note for any non-circle constant curvature curve $\gamma$ and any choice of point $x \in \gamma$ there exists a constant curvature curve $\gamma_x$ such that $\gamma \cap \gamma_x = \{x\}$, and we can choose the $\gamma_x$ to be pairwise disjoint. 

However, if we have a circle $\gamma$ and a family of constant curvature curves $\{\gamma_x\}$ such that $\gamma \cap \gamma_x = \{x\}$, then each $\gamma_x$ is non-crossing tangent to $\gamma$, so some of the $\gamma_x$ must intersect by Proposition~\ref{tangent}. It follows $f$ preserves circles, so we are done.\end{proof}

\subsection{Classification of geodesic-preserving bijections} We start with a similar method to the previous cases of showing classes of geodesics are preserved by our geodesic-preserving bijection. We first show that vertical geodesics are sent to vertical geodesics by $f$. Since each point in $\Hyp^2$ determines a unique vertical geodesic, this allows us to define a bijection $f_\star: \Hyp^2 \rightarrow \Hyp^2$ given by the action of $f$ on vertical geodesics. We then claim $f_\star$ takes constant curvature curves to constant curvature curves. To see this, first note that $f$ permutes the horizontal and slant geodesics all of which project to a constant curvature curve in $\Hyp^2$. When $f$ acts on a geodesic, it acts on all the vertical translates of that geodesic, so the claim follows. 

\begin{lemma} \label{slvert}
    Let $f: \sltilde \to \sltilde$ be a geodesic-preserving bijection. Then $f$ preserves vertical geodesics.
\end{lemma}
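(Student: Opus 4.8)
The plan is to characterize vertical geodesics purely in terms of the geodesic incidence structure, so that any geodesic-preserving bijection must send them to themselves as a class. The key structural feature available to us is that vertical geodesics are the fibers of the line bundle $p:\sltilde\to\Hyp^2$, while horizontal and slant geodesics all project to constant curvature curves in $\Hyp^2$. First I would look for a property distinguishing the vertical geodesics from the other two classes in terms of how geodesics intersect or fail to intersect. A natural candidate is to count, for a fixed geodesic $\gamma$ and a point $x\notin\gamma$, how many geodesics through $x$ meet $\gamma$, or to examine the intersection pattern of two geodesics. A vertical geodesic is a single fiber, so its projection is a point; any horizontal or slant geodesic projects to a genuine curve. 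This projection behavior is what I want to detect intrinsically.

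The cleanest approach I would try is the following: a vertical geodesic $v$ should be characterized as a geodesic that is disjoint from \emph{uncountably many} other geodesics sharing a common incidence feature, or conversely by the property that two distinct vertical geodesics are always disjoint (they sit over distinct points of $\Hyp^2$), whereas a slant or horizontal geodesic meets the vertical fiber over each point of its projected curve. Concretely, I would examine a fixed point $x\in\sltilde$: through $x$ there passes exactly one vertical geodesic (the fiber over $p(x)$), together with the whole family of horizontal and slant geodesics whose projections are curves through $p(x)$. I would try to isolate the vertical fiber through $x$ as, say, the unique geodesic through $x$ that is disjoint from every other geodesic through a second suitably chosen point, or as the intersection of an appropriate family of geodesics (a guaranteed-set style argument, analogous to the cylinder case). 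Since guaranteed sets are preserved by $f$ (as noted in the cylinder section), if vertical geodesics can be cut out as guaranteed sets or as intersections characterized by incidence, preservation follows immediately.

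The main obstacle I anticipate is that the geodesics of $\sltilde$ have a subtle intersection structure: two slant geodesics projecting to different constant curvature curves can meet, be disjoint, or meet infinitely often depending on the winding, and a slant geodesic can meet a vertical fiber once or not at all. So a naive ``disjointness'' characterization may fail to cleanly separate verticals from slants. I would therefore expect the real work to be in finding the correct incidence-theoretic invariant — most likely exploiting that a vertical geodesic is the unique geodesic meeting \emph{every} geodesic in a prescribed rotationally symmetric family through its basepoint, or that verticals are precisely the geodesics admitting a full one-parameter family of parallel (disjoint) translates via the winding maps while no slant geodesic does. Once verticals are shown to be preserved as a class, the fact that each point of $\Hyp^2$ corresponds to a unique vertical fiber lets me define $f_\star:\Hyp^2\to\Hyp^2$ directly, and the argument sketched after the lemma statement (that $f_\star$ preserves constant curvature curves, hence is an isometry by Proposition~\ref{constantcurvature}) completes the chain. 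The crux is thus entirely the geometric/combinatorial separation of the vertical class from the slant class, and I would invest the proof's effort there.
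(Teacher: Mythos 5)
Your high-level strategy --- characterize vertical geodesics by incidence properties alone, so that any geodesic-preserving bijection must preserve the class --- is exactly the paper's strategy, but your proposal never lands on a characterization that works, and the specific candidates you float all fail. The candidates phrased in terms of winding maps or ``rotationally symmetric families'' are not incidence-theoretic at all: $f$ is only assumed to preserve geodesics, not to commute with any isometries, so ``admits disjoint translates under winding maps'' is not a property $f$ is known to respect. (Worse, even as a geometric fact it does not single out verticals: a horizontal geodesic and a slant over a circle each admit uncountably many pairwise disjoint winding translates, since a generic vertical shift of such a geodesic is disjoint from it.) Your other candidate, ``the unique geodesic through $x$ disjoint from every geodesic through a suitably chosen second point $y$,'' is vacuous: $\sltilde$ is complete, so every point of every geodesic through $x$ is joined to $y$ by some geodesic, hence \emph{no} geodesic through $x$ has this property.

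The missing idea is an intersection \emph{count}, and your proposal contains a factual error precisely where that count lives: you assert ``a slant geodesic can meet a vertical fiber once or not at all,'' but a slant whose projection is a \emph{circle} winds over that circle infinitely many times and therefore meets the fiber over each point of the circle infinitely often (once per loop, at increasing heights). This is the key fact. The paper's proof runs in two steps: (1) the geodesics that can intersect some other geodesic infinitely often are exactly the verticals and the circle-projection slants (horizontals, horocycle-slants, and hypercycle-slants project to embedded curves traversed once, so their intersections with any geodesic are finite); since $f$ and $f^{-1}$ preserve intersection cardinalities, this union class is preserved. (2) Within this class, two circle-slants can intersect exactly once, whereas a vertical meets any vertical or circle-slant either zero or infinitely many times; so if $f$ sent a vertical $v$ to a circle-slant $s$, pulling back a circle-slant meeting $s$ exactly once would produce a member of the class meeting $v$ exactly once, a contradiction. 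Your proposal records several of the raw intersection patterns but never organizes them into this two-step discrimination, and with the erroneous ``once or not at all'' claim in place, step (1) is not even available to you.
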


\begin{proof}
    We observe that vertical and slant geodesics are precisely the classes of geodesics that may intersect other geodesics infinitely often. Indeed, we find that horizontals, slants with horocycle projection, and slants with hypercycle projection cannot intersect another geodesic infinitely often by considering their projections onto $\Hyp^2$. Since $f$ must preserve intersection types of geodesics, it sends vertical geodesics to verticals or slants. Finally, we observe that two slant geodesics may intersect exactly once, while a vertical cannot intersect a slant exactly once nor can it intersect another vertical. It follows that verticals get sent to verticals. 
\end{proof}

Combining the previous lemmas we have

\begin{lemma}

Let $f: \sltilde \rightarrow \sltilde$ be a geodesic-preserving bijection. Then the bijection $f_\star: \Hyp^2 \rightarrow \Hyp^2$ given by the action on vertical geodesics is an isometry. Furthermore, $f$ preserves the class of all geodesics.
\end{lemma}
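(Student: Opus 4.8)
The plan is to exploit that $f$ preserves verticals (Lemma~\ref{slvert}): since the vertical geodesics are exactly the fibers of the bundle $p:\sltilde \to \Hyp^2$ and each point of $\Hyp^2$ determines a unique fiber, $f$ descends to a well-defined bijection $f_\star$ of $\Hyp^2$ characterized by $f_\star \circ p = p \circ f$ (indeed $f$ carries the fiber over a point to the fiber over another point, so the projection formula holds at the level of points, hence of curves). I would first show $f_\star$ takes constant curvature curves to constant curvature curves, invoke Proposition~\ref{constantcurvature} to conclude $f_\star$ is an isometry, and then read off the behavior of $f$ on the remaining geodesic classes from the fact that isometries of $\Hyp^2$ preserve geodesic curvature exactly.

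For the first step, note that because $f$ is a bijection of the geodesics sending verticals to verticals, it permutes the non-vertical geodesics, namely the horizontals and slants. Every such geodesic $\gamma$ projects under $p$ to a constant curvature curve of $\Hyp^2$ -- a geodesic ($K=0$) when $\gamma$ is horizontal and a circle, horocycle, or hypercycle when $\gamma$ is slant -- and, conversely, every constant curvature curve of $\Hyp^2$ arises as such a projection, since each curvature value is realized by a geodesic of the corresponding vertical speed. For a non-vertical $\gamma$ one has $p(f(\gamma)) = f_\star(p(\gamma))$, and since $f(\gamma)$ is again non-vertical, its projection is again a constant curvature curve. Thus $f_\star$ maps the class of constant curvature curves into itself; applying the same argument to $f^{-1}$, which is also geodesic-preserving, shows $f_\star$ is a bijection of this class, so Proposition~\ref{constantcurvature} gives that $f_\star$ is an isometry.

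For the second statement, I would use that an isometry of $\Hyp^2$ preserves the geodesic curvature of a curve, so $f_\star$ sends curvature-$0$ curves to curvature-$0$ curves and nonzero-curvature curves to nonzero-curvature curves. Combined with the dictionary above, this pins down $f$: if $\gamma$ is horizontal then $p(\gamma)$ has $K=0$, hence $f_\star(p(\gamma)) = p(f(\gamma))$ has $K=0$; as slant geodesics project to nonzero curvature curves, the non-vertical geodesic $f(\gamma)$ must then be horizontal. The identical argument with nonzero curvature shows slants go to slants. Together with Lemma~\ref{slvert}, $f$ preserves all three classes of geodesics.

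The main obstacle I anticipate is the bookkeeping underlying the descent: one must confirm that $f$ respects the bundle structure cleanly enough that the curve-level identity $p(f(\gamma)) = f_\star(p(\gamma))$ holds, and that every constant curvature curve genuinely lifts to a non-vertical geodesic, so that $f_\star$ is tested on the entire class and is verified to be an honest bijection of constant curvature curves, as Proposition~\ref{constantcurvature} requires. Both points follow from $f$ mapping whole fibers to whole fibers and from the linear correspondence between curvature of the projection and vertical speed of a slant geodesic, but checking that the projections of non-vertical geodesics cover all curvature values is the step that needs the most care.
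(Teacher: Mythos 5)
Your proposal is correct and follows essentially the same route as the paper: use Lemma~\ref{slvert} to define $f_\star$, show it preserves constant curvature curves via the projections of horizontal and slant geodesics, apply Proposition~\ref{constantcurvature} to get an isometry, and then use curvature-preservation to conclude that $f$ preserves all geodesic classes. Your version simply spells out the bookkeeping (the identity $p \circ f = f_\star \circ p$, surjectivity onto all curvature values, and the $f^{-1}$ argument for bijectivity on constant curvature curves) that the paper leaves implicit.
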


\begin{proof}
 By Lemma~\ref{slvert}, $f_\star$ is well-defined. Since $f_\star$ preserves constant curvature curves, we have by Proposition~\ref{constantcurvature} that $f_\star$ is an isometry. Now $f$ preserves the classes of all geodesics since these were defined by the type of constant curvature curve in the projection, and an isometry preserves the curvature.
\end{proof}

We use this with the classification of bijections of $\Hyp^2$ preserving constant curvature curves to prove the desired result. 

\begin{proof} [Proof of Theorem \ref{sl}]

    Choose a point $x \in \sltilde$, and compose $f$ with some isometry and rename the result $f$ so that now $f$ fixes $x$. We can then compose with a rotation about $x$, so that $f_\star$ is the identity. We claim $f$ pointwise fixes each horizontal geodesic $h$ through $x$. For any point $y \in h$ other than $x$, there is a vertical geodesic $v$ such that $v \cap h = \{y\}$. Since $f(v)=v$, and $f(h)$ must be a horizontal geodesic intersecting $f(v)$ and $x$, it follows $f$ fixes $h$. The claim then follows since the intersection of $v$ and $h$ must now be fixed. We then claim any two points of $\sltilde$ are connected by a path of finitely many horizontal geodesic segments, so an induction argument shows $f$ pointwise fixes all of $\sltilde$.

    To see the final claim, note that we can assume the points project to the same point $\Hyp^2$ by following from one point along a horizontal path. Then we choose some closed polygonal path in $\Hyp^2$ of area equal to the vertical distance between the points, and lift each side of the polygon to a horizontal geodesic segment.
 \end{proof}

\section{Geodesic-preserving bijections of Nil} \label{sec: nil}

The $\Nil$ geometry is given by the Lie group structure on the Heisenberg group. It has the following group operation on $\R^3$ and left-invariant metric: \[(x,y,z) \cdot (a,b,c) = (x+a,y+b,z+c+xb)\]
 \[ds^2 = dx^2 + dy^2 + (dz-xdy)^2\] Our goal in this section is to prove the following. 

\begin{theorem} \label{nil}
    Let $f: \Nil \to \Nil$ be a geodesic-preserving bijection. Then $f$ is an isometry.
\end{theorem}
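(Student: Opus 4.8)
The plan is to mirror the $\sltilde$ argument: use the line-bundle structure $\pi\colon \Nil \to \E^2$ to push $f$ down to a bijection $f_\star$ of $\E^2$, show $f_\star$ is an isometry, and then reconstruct $f$ fiber by fiber. Recall that a geodesic of $\Nil$ is either \emph{vertical} (a fiber, projecting to a point), \emph{line-type} (projecting to a line of $\E^2$), or \emph{circle-type} (projecting to a circle). First I would record how the three classes intersect one another: a line-type geodesic meets any geodesic in finitely many points, whereas a vertical and a circle-type geodesic over a point of its circle meet infinitely often, so $f$ must preserve the line-type class. To separate verticals from circle-type geodesics, note that a vertical meets every other geodesic in $0$, $1$, or infinitely many points, never in exactly two, while a circle-type geodesic can meet a line- or circle-type geodesic in exactly two points; since $f$ preserves intersection cardinalities, it preserves all three classes. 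Preservation of verticals gives a well-defined bijection $f_\star\colon \E^2 \to \E^2$, preservation of line-type geodesics shows $f_\star$ sends lines to lines, and preservation of circle-type geodesics shows $f_\star$ sends circles to circles. By the classification of geodesic-preserving bijections of $\E^2$ \cite{Jeffers}, $f_\star$ is affine, and an affine bijection carrying circles to circles is a similarity; say it scales lengths by $\lambda > 0$.

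The main obstacle is ruling out a nontrivial scaling, i.e.\ showing $\lambda = 1$; this is exactly where $\Nil$ differs from the product geometries, in which rescaling the $\R$-factor is permitted. The point is that the vertical behaviour of circle-type geodesics is not scale-invariant. A short computation with the geodesic equations of the metric $ds^2 = dx^2 + dy^2 + (dz - x\,dy)^2$ shows that a unit-speed circle-type geodesic whose projection is a circle of radius $R$ rises, over one full revolution of the projection, by
\[
\Delta z(R) = \pi\,(R^2 + 2),
\]
where the $R^2$ term is the enclosed area $\oint x\,dy$ and the additive constant comes from the vertical drift. I would exploit the scale-breaking constant as follows. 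Fix a point $p \in \E^2$, its fiber $V$, and two circles $C_1, C_2$ of radii $R_1 \ne R_2$ tangent to each other at $p$, so that $C_1 \cap C_2 = \{p\}$. Choosing circle-type geodesics $\gamma_i$ over $C_i$ through a common point of $V$, the set $\gamma_1 \cap \gamma_2 \subset V$ is the intersection of two arithmetic progressions of spacings $\Delta z(R_1), \Delta z(R_2)$, hence is infinite exactly when $\Delta z(R_1)/\Delta z(R_2) \in \mathbb{Q}$. Applying $f$ replaces each $R_i$ by $\lambda R_i$ while preserving tangency and the cardinality of $\gamma_1 \cap \gamma_2$, so
\[
\frac{R_1^2 + 2}{R_2^2 + 2} \in \mathbb{Q} \iff \frac{\lambda^2 R_1^2 + 2}{\lambda^2 R_2^2 + 2} \in \mathbb{Q}.
\]
Writing the second ratio as $q + \dfrac{2(q-1)(\lambda^2 - 1)}{\lambda^2 R_2^2 + 2}$ when the first equals $q \in \mathbb{Q}$, I can choose $q \ne 1$ and $R_2$ so that the correction term is irrational whenever $\lambda \ne 1$, a contradiction. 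Hence $\lambda = 1$ and $f_\star$ is an isometry of $\E^2$.

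To finish I would reconstruct $f$, following the endgame of the $\sltilde$ case. Since $\Isom(\Nil)$ surjects onto $\Isom(\E^2)$ with kernel the vertical translations, I lift $f_\star$ to a $\Nil$-isometry and compose so that $f_\star = \mathrm{id}$; then $f$ preserves every fiber, and composing with a vertical translation I arrange $f(p_0) = p_0$ for a basepoint $p_0$. The key observation is that line-type geodesics satisfy $\dot z = x\,\dot y$, so they are exactly the horizontal lifts of lines for the connection $dz = x\,dy$. For any fixed point $q$, a line-type geodesic $\ell$ through $q$ maps to the line-type geodesic over the same base line through $f(q) = q$, which is $\ell$ itself; and each point of $\ell$ is the unique intersection of $\ell$ with a setwise-fixed fiber, so $f$ fixes $\ell$ pointwise. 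Thus $f$ pointwise fixes every horizontal-lift segment emanating from a fixed point. Because horizontal lifts of a closed polygon of signed area $A$ change height by $A$ (the holonomy of the connection), any point of $\Nil$ is joined to $p_0$ by a finite chain of such segments, and an induction along the chain shows $f$ is the identity. Therefore $f$ equals the identity up to the isometries removed along the way, i.e.\ $f$ is an isometry. The delicate step is the scaling argument of the second paragraph; the class-preservation and reconstruction steps are adaptations of the $\sltilde$ and cylinder arguments.
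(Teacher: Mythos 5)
Your overall route is the paper's route: show line-type (parabolic) geodesics are preserved, then separate verticals from circle-type geodesics, push $f$ down to $f_\star$ on $\E^2$, show $f_\star$ is an isometry, and reconstruct $f$ via horizontal lifts and holonomy. The endgame paragraph matches the paper's almost verbatim. However, there is a genuine gap in your class-separation step. You claim a circle-type geodesic can meet a line-type or circle-type geodesic in exactly two points, and you use "meets something in exactly two points" as the invariant distinguishing circle-types from verticals. The line-type half of that claim is false: a line-type geodesic and a circle-type geodesic meet in \emph{at most one} point. Indeed, line-type geodesics are exactly the horizontal lifts ($\dot z = x\dot y$, i.e.\ conserved momentum $c = \dot z - x\dot y$ equal to $0$), while a circle-type geodesic has $c \neq 0$ and angular speed $|c|$ in its projection. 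If both pass through a common point over $p_1$, and the projected line crosses the projected circle (radius $R$) again at $p_2$, with the arc traversed by the circle-type geodesic subtending angle $\theta \in (0,2\pi)$, then the height discrepancy over $p_2$ is $\bigl[\int_{\mathrm{arc}} x\,dy - \int_{\mathrm{chord}} x\,dy\bigr] + c\,t_{\mathrm{arc}} = \tfrac{R^2}{2}(\theta - \sin\theta) + \theta$, which lies strictly in $\bigl(0, \pi(R^2+2)\bigr)$, i.e.\ strictly between consecutive passages of the circle-type geodesic over $p_2$; so a second intersection never occurs. The circle-circle half of your claim may be true, but it is genuinely nontrivial (it requires solving a Diophantine-type condition on areas and pitches with irrational pitch ratio) and you give no argument. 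The repair is the paper's move, and it is a tool you already deploy later: since line-types are preserved by your first step, $f$ permutes the class of verticals and circle-types; within that class, two circle-type geodesics over circles \emph{tangent} at a point, passing through a common point of the fiber, with $(R_1^2+2)/(R_2^2+2)$ irrational, meet exactly once, whereas a vertical meets any vertical or circle-type geodesic in zero or infinitely many points. That invariant cleanly separates the two classes.

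On the other side of the ledger, your scaling paragraph is correct and is actually more careful than the paper at the corresponding step. The pitch formula $\Delta z(R) = \pi(R^2+2)$ is right (the $\pi R^2$ from the enclosed area plus $2\pi$ of drift per revolution), tangency and intersection cardinality are preserved, and the identity $\frac{\lambda^2 R_1^2+2}{\lambda^2 R_2^2+2} = q + \frac{2(q-1)(\lambda^2-1)}{\lambda^2 R_2^2+2}$ checks out, so fixing $q=2$ and varying $R_2$ over uncountably many values forces $\lambda = 1$. The paper merely asserts that since $f_\star$ is affine and preserves circles it must be an isometry; as literally stated that inference would also admit similarities, so your argument supplies exactly the missing justification that the induced map has no nontrivial dilation factor. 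With the class-separation step repaired as above, your proof is complete and slightly strengthens the paper's exposition.
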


\subsection{Background}

We first give some background again following Scott \cite{Scott}. $\Nil$ has the metric structure of a line bundle over $\E^2$. We parametrize $\Nil$ so the $xy$-plane is this Euclidean base space. As with $\sltilde$, this horizontal plane is not a totally geodesic subsurface, but the restriction of the metric to the plane gives $\E^2$. 

\p{Geodesics} Since the left-invariant metric on $\Nil$ is not also right-invariant, some geodesics are not translates of one-parameter subgroups. The geodesics of $\Nil$ are explicitly computed in the solution to Exercise 2.90 bis (c) in \cite[Appendix~B]{gallot}. Following this description, the geodesics in $\Nil$ can be split into the following classes:

\begin{itemize}
    \item parabolic geodesics such that the projection to $\E^2$ are lines. The actual shape of these geodesics are either actual horizontal lines or parabolas.
    \item vertical geodesics such that the projection to $\E^2$ is a point. These geodesics are the fibers of the line bundle structure.
    \item slant geodesics such that the projection to $\E^2$ is a circle.

\end{itemize}

The slant geodesics are ``spirals" whose projection traces out a circle infinitely many times. In the given parameterization of $\Nil$, the exact vertical speed of a slant geodesic at any moment is difficult to describe qualitatively, but the spiral makes consistent vertical progress each time it traces out a circle.

From this description of the geodesics of $\Nil$, we have that Proposition~\ref{prop: fundlemma} applies. It is also known $\Nil$ has no nontrivial totally geodesic submanifolds (see \cite[Theorem 7.2]{Tsukada1996}), and thus we have that there are no nontrivial totally geodesic subsets.

\p{Isometries} For $\Nil$, all isometries are known to preserve the line bundle structure, thus descending to an isometry on $\E^2$. The isometry group of $\Nil$ fits into the short exact sequence 
\[
\begin{tikzcd}
1 \arrow[r] & \R \arrow[r] & \Isom(\Nil) \arrow[r] & \Isom(\E^2) \arrow[r] & 1
\end{tikzcd}
\]

Since $\Isom(\E^2)$ has two components corresponding to orientation-preserving and orientation-reversing isometries, $\Isom(\Nil)$ also has two components. Unlike the $\sltilde$ case, $\Nil$ cannot be thought of the universal cover of the unit vector space of $\E^2$ (which is $\E^3$), so we do not have a natural action of $\Isom(\E^2)$ on $\Nil$. Similar to the $\sltilde$ case though, every isometry of $\Nil$ is orientation-preserving. 

Another description for $\Isom(\Nil)$ is $\Nil \rtimes O(2)$ where $O(2)$ is the orthogonal group. Here $O(2)$ is also the identity component of the stabilizer of a point, so we can refer to rotations about a given point.

\subsection{Classification of geodesic-preserving bijections} Due to the similarity in structure to the $\sltilde$ case, we  use a very similar argument for the classification of geodesic-preserving maps. 

\begin{lemma} \label{lemma:nilclass}
    Let $f: \Nil \to \Nil$ be a geodesic-preserving bijection. Then $f$ preserves classes of geodesics.
\end{lemma}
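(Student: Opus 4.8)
The plan is to mirror the argument for $\sltilde$ in Lemma~\ref{slvert}, using that a geodesic-preserving bijection $f$ preserves the cardinality of $\gamma \cap \gamma'$ for any two geodesics, since $f(\gamma \cap \gamma') = f(\gamma) \cap f(\gamma')$. I would separate the three classes — parabolic, vertical, and slant — by characterizing each through the possible intersection cardinalities with other geodesics, each characterization being manifestly $f$-invariant. The situation is in fact cleaner than $\sltilde$ because every non-line constant curvature curve in $\E^2$ is a circle, so there is no horocycle/hypercycle complication.

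First I would isolate the parabolic geodesics. Since a parabolic geodesic projects to a line $\ell \subset \E^2$, and a line meets another line in at most one point (or coincides) and a circle in at most two points, I would show that a parabolic geodesic meets every other geodesic in only finitely many points; the only subtle subcase is two parabolics over a common line, which are graphs of degree at most $2$ over that line and hence either agree or meet finitely. By contrast a vertical geodesic over $p$ meets any slant whose circle passes through $p$ infinitely often. Hence the parabolic geodesics are exactly those $\gamma$ for which no other geodesic meets $\gamma$ infinitely often, an $f$-invariant condition, so $f$ preserves the parabolics as well as the complementary class of verticals and slants.

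The crux is computing the intersection of a slant with a vertical or another slant. The key observation is that, by the ``consistent vertical progress per loop'' description, a slant over a circle $C$ passes over each fixed point $p \in C$ at the heights $h_0 + \delta\Z$ for some drift $\delta \neq 0$, an arithmetic progression. Consequently a vertical over a point of $C$ meets the slant in infinitely many points, while two distinct verticals are disjoint; thus a vertical meets any other vertical-or-slant in $0$ or $\infty$ points, never exactly one. To finish I would show that two slants can meet in exactly one point: take two circles tangent at a single point $p$ whose slants have vertical drifts $\delta_1, \delta_2$ with $\delta_1/\delta_2$ irrational, so that over $p$ the progressions $a_1 + \delta_1\Z$ and $a_2 + \delta_2\Z$ share at most one height, and after a vertical translation of one slant (an isometry) they share exactly one. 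Hence within the vertical-and-slant class, slants are precisely those geodesics admitting another geodesic of the class meeting them in exactly one point, again $f$-invariant; combined with the previous step, $f$ preserves all three classes.

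I expect the main obstacle to be this last separation of verticals from slants, since it hinges on controlling the drift $\delta$ of a slant over a given circle: one must justify that the drifts of slants over two tangent circles can be made to have irrational ratio (for instance by choosing the radii) so that two slants meet in exactly one point. The intersection count of a slant with a vertical, and the finiteness of intersections for parabolics, are routine once the ``arithmetic progression of heights over a fixed point'' picture is established.
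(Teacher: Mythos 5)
Your proposal is correct and takes essentially the same approach as the paper: isolate the parabolics as the geodesics that never meet another geodesic infinitely often, then separate verticals from slants by exhibiting two slant geodesics meeting in a nonzero finite number of points via a rational/irrational drift argument, both being $f$-invariant intersection conditions. The paper's construction uses two slants issuing from a common point, one with rational and one with irrational vertical displacement per circuit, rather than your tangent circles with irrational drift ratio, but this is the same key idea.
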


\begin{proof}
    We observe that vertical and slant geodesics are precisely the classes of geodesics that may intersect other geodesics infinitely often. Indeed, we find that parabolics cannot intersect another geodesic infinitely often by considering their projections onto $\E^2$. Since $f$ must preserve intersection types of geodesics, it sends vertical geodesics to verticals or slants. Finally, we claim that two slant geodesics may intersect finitely often, while a vertical cannot intersect a slant finitely often nor can it intersect another vertical. It follows that verticals get sent to verticals. 

    The final claim is harder than in the $\sltilde$ case, but we can check the exact parametric equations for the slant geodesics (see the solution to Exercise 2.90 bis (c)  in \cite[Appendix~B]{gallot}), and see it is possible to choose slants starting from the same point where one has rational and the other has irrational displacement each time it hits the vertical geodesic over the initial point.
\end{proof}

Since vertical geodesics are now preserved by Lemma~\ref{lemma:nilclass}, we have a well-defined bijection $f_\star: \E^2 \rightarrow \E^2$ given by the action of $f$ on the vertical geodesics. Next, we show $f_\star$ is an isometry.

\begin{lemma}
     Suppose $f: \Nil \rightarrow \Nil$ is a geodesic-preserving bijection, then the induced bijection $f_\star:\E^2 \rightarrow \E^2$ given by the action of $f$ on vertical geodesics is an isometry.
 \end{lemma}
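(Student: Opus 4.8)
The plan is to produce from $f$ the induced map $f_\star$ and pin it down in two stages: first show it is a similarity of $\E^2$, then kill the scaling factor using the vertical structure of $\Nil$. Since $f$ preserves classes of geodesics by Lemma~\ref{lemma:nilclass}, it carries parabolic geodesics to parabolic geodesics, slant geodesics to slant geodesics, and vertical fibers to vertical fibers, so $f_\star$ is well defined. I would first check that $f_\star$ takes lines to lines: a parabolic geodesic $\gamma$ over a line $\ell$ meets the fiber $v_x$ in exactly one point for each $x\in\ell$ and in none otherwise, so from $f(\gamma\cap v_x)=f(\gamma)\cap v_{f_\star(x)}$ together with the fact that $f(\gamma)$ is parabolic over some line $\ell'$, one gets $f_\star(\ell)\subseteq\ell'$, with equality after applying the same argument to $f^{-1}$. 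The identical argument with slant geodesics, whose projections are circles (and every circle occurs), shows $f_\star$ takes circles to circles. By the fundamental theorem of affine geometry, equivalently Jeffers' classification \cite{Jeffers}, a line-preserving bijection of $\E^2$ is affine, and an affine map carrying every circle to a circle is a similarity; write $\lambda>0$ for its ratio.

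It remains to show $\lambda=1$, and this is where the metric in the fiber direction enters. Using the geodesic equations (see \cite[Appendix~B, Ex.~2.90 bis (c)]{gallot}) one finds that a slant geodesic whose projection is a circle of radius $r$ meets each fiber over that circle in an arithmetic progression of heights with common gap $P(r)=2\pi+\pi r^2$, where $\pi r^2$ is the enclosed area and the summand $2\pi$ is a scale-invariant full-turn contribution. Moreover the parabolic geodesics are exactly the horizontal lifts of lines for the connection form $dz-x\,dy$, so the vertical holonomy of a closed polygon in the base equals its area.

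The key step, and the main obstacle, is to transfer this holonomy relation through $f$ despite $f$ being possibly discontinuous; I would phrase everything through incidence rather than through arcs or betweenness. Fix a vertex $A$ and a point $p_0$ on $v_A$, and encode the horizontal lift of a base triangle $ABC$ as three parabolic geodesics $\gamma_1,\gamma_2,\gamma_3$ over its edge-lines with $\gamma_1\cap\gamma_2$ and $\gamma_2\cap\gamma_3$ nonempty and $\gamma_1$ the lift through $p_0$; then the points $p_0=\gamma_1\cap v_A$ and $p_1=\gamma_3\cap v_A$ have height difference $\pm\,\mathrm{area}(ABC)$. Because $f$ preserves all of these incidences and carries the configuration to the corresponding one for $f_\star(ABC)$, whose area is $\lambda^2\,\mathrm{area}(ABC)$, the restriction $f|_{v_A}$ multiplies these height differences by $\lambda^2$; letting the triangle vary over all areas and both orientations shows $f|_{v_A}$ is an affine map of $\R$ of slope $\pm\lambda^2$. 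Finally, applying $f|_{v_A}$ to the slant-geodesic progression of gap $P(r)$ yields a progression of gap $P(\lambda r)$, forcing $\lambda^2 P(r)=P(\lambda r)$, that is $2\pi\lambda^2+\pi\lambda^2 r^2=2\pi+\pi\lambda^2 r^2$. Hence $\lambda^2=1$, so $\lambda=1$ and $f_\star$ is an isometry. The intuition is that the vertical period splits into an area part, which must scale by $\lambda^2$, and an intrinsic $2\pi$ part, which cannot scale; consistency forces $\lambda=1$.
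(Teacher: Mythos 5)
Your proof is correct, and it is in fact more complete than the paper's own argument. The first half coincides with the paper's proof: parabolics force $f_\star$ to take lines to lines, hence $f_\star$ is affine by Jeffers, and slants force $f_\star$ to take circles to circles. At that point the paper simply concludes that $f_\star$ ``must be an isometry,'' but an affine bijection of $\E^2$ carrying circles to circles is only a similarity --- a dilation preserves both lines and circles --- so the ratio $\lambda$ still has to be ruled out, and the paper's one-line conclusion does not address this. Your second stage supplies exactly that missing step, and the computation checks out: writing $c$ for the conserved vertical momentum $\dot z - x\dot y$, a unit-speed slant geodesic projects to a circle of radius $r=\sqrt{1-c^2}/|c|$ and gains $\pm(2\pi+\pi r^2)$ in height per loop, so the fiber gap is indeed $P(r)=2\pi+\pi r^2$, while horizontal lifts of closed polygons have vertical holonomy equal to signed area. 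Your insistence on phrasing the holonomy transfer purely through incidence (which parabolics meet which fibers and each other) rather than through arcs or continuity is also the right move, since $f$ is not assumed continuous; it yields $|f(q_1)-f(q_2)|=\lambda^2|q_1-q_2|$ on a fiber, and then $\lambda^2 P(r)=P(\lambda r)$ forces $\lambda=1$ because the $2\pi$ term cannot scale. In short: you follow the paper's route up to ``similarity,'' and then your holonomy/period argument closes a genuine gap in the published proof, at the cost of invoking the explicit geodesic equations of $\Nil$.
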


\begin{proof}
 Note $f_\star$ preserves lines since parabolics are preserved. We have that $f_\star$ is an affine map by Jeffers \cite{Jeffers}, but possibly not an isometry. However, $f_\star$ preserves circles since slant geodesics are preserved, so $f_\star$ must be an isometry.
\end{proof}

 We are now ready to prove the desired result.
 
\begin{proof} [Proof of Theorem \ref{nil}]
Given the above lemma, we can compose $f$ with an isometry of $\Nil$ and rename the result $f$ so that the bijection $f_\star:\E^2 \rightarrow \E^2$ induced by the action on vertical lines is the identity, i.e., $f$ fixes each vertical line setwise. Furthermore, composing with a vertical translation we can assume $f$ fixes the origin. Since $f$ also preserves parabolics, it follows that $f$ setwise fixes each parabolic geodesic through the origin since there is a unique parabolic geodesic intersecting the origin and that vertical line. Furthermore, $f$ pointwise fixes each parabolic through the origin since each point is the intersection with a vertical geodesic setwise fixed by $f$.

We are now done by induction since arbitrary distinct points are connected by a finite path of parabolic geodesics. For this last point, we use the known fact that one can get from a given point to any point on the vertical geodesic through it by traveling along the parabolic lifts of the sides of an appropriately chosen square.
\end{proof}

\section{Geodesic-preserving bijections of Sol} \label{sec: sol}

Now we discuss perhaps the most complicated Thurston geometry, $\Sol$. However, due to an abundance of totally geodesic subsurfaces, the classification of geodesic-preserving bijections is simpler than the other cases. 

\begin{theorem} \label{sol}
      A geodesic-preserving bijection $f: \Sol \rightarrow \Sol$ is an isometry.
\end{theorem}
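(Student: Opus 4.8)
The plan is to follow the strategy sketched in the introduction and the outline: reduce the classification of geodesic-preserving bijections of $\Sol$ to the known classification for $\Hyp^2$, using the abundance of totally geodesic hyperbolic planes. First I would recall the stated fact that through each point of $\Sol$ there are exactly two orthogonal totally geodesic hyperbolic planes, and that these are the only nontrivial totally geodesic subsurfaces. In the coordinates given by the group law, these planes are the slices $\{x = \text{const}\}$ and $\{y = \text{const}\}$ (the two families swept out by the two contracting/expanding directions of the solvable flow); each such slice, with the restricted metric, is isometric to $\Hyp^2$. Since the geodesics of $\Sol$ never return too close to themselves, Proposition~\ref{prop: fundlemma} applies, so every totally geodesic subset is an embedded submanifold, and hence every nontrivial totally geodesic subset is one of these hyperbolic planes.

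The key steps, in order, would be: (1) Establish that a geodesic-preserving bijection $f$ permutes the hyperbolic planes. Because $f$ restricts to a geodesic-preserving bijection between totally geodesic subsets, and by the previous paragraph the nontrivial totally geodesic subsets are exactly the hyperbolic planes, $f$ must carry each hyperbolic plane to another hyperbolic plane. (2) Show that $f$ preserves, or at worst swaps, the two families of planes, and that the restriction of $f$ to each plane is an isometry by Jeffers' classification for $\Hyp^2$ \cite{Jeffers} (a geodesic-preserving bijection of $\Hyp^2$ is an isometry). (3) Use that each point is the intersection of one plane from each family, so $f$ is determined by its action on the two families; the induced actions are compatible along the common intersection geodesics, which pins down $f$ as a global isometry. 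Concretely, after composing $f$ with an isometry of $\Sol$ I would arrange that $f$ fixes one hyperbolic plane pointwise and fixes a second transverse plane setwise, then propagate the fixing across all of $\Sol$ by moving along chains of planes, exactly analogous to the propagation arguments used in the $\HxR$ and $\sltilde$ cases.

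The main obstacle I expect is step (3): matching up the separate isometries that $f$ induces on the two transverse families and showing they assemble into a single global isometry rather than merely a ``shearing'' that is isometric on each plane individually. The danger is an exotic map that acts isometrically on every hyperbolic plane but is not an isometry of the ambient $\Sol$, analogous to how twisting maps appear for the cylinder. I would rule this out by exploiting the rigidity of how the two plane families intersect: two planes from the same family are disjoint or equal, while two planes from opposite families meet in a single common geodesic, and $f$ must respect these incidence relations. The compatibility of the two induced isometries along these intersection geodesics, together with the fact that the holonomy/shearing of $\Sol$ along the contracting and expanding directions is rigid, should force the global map to be an isometry; the remaining casework is distinguishing whether $f$ preserves or interchanges the two families, with the interchanging case corresponding to an orientation-reversing isometry of $\Sol$.
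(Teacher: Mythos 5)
Your proposal is correct and takes essentially the same route as the paper's proof: classify the totally geodesic subsets via Proposition~\ref{prop: fundlemma}, show $f$ permutes the hyperbolic planes and restricts to an isometry on each by Jeffers, normalize by isometries of $\Sol$ so $f$ fixes two transverse planes, and then propagate the identity across the transverse family using incidence relations. The ``exotic shearing'' danger you flag (independent reflections on different planes of one family) is exactly the issue the paper resolves in its last step, by observing that inconsistent per-plane choices would send a hyperbolic plane parallel to $H_1$ to a set that is not a hyperbolic plane.
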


$\Sol$ is a Lie group with the following group operation on $\R^3$ and left-invariant metric: 
\[(x, y, z) \cdot (a, b, c) = (e^{-z}a + x, e^{z}b + y, c + z)\]
\[ds^2 = e^{2z}dx^2 + e^{-2z}dy^2 + dz^2\]

\subsection{Background}

Sol arises from the semidirect product $\R^2 \rtimes \R$ where $z\in\R$ acts by $(x, y) \to (e^zx, e^{-z}y)$. Thus we can consider $\Sol$ as a plane bundle over $\R$. Each plane in this bundle corresponds to setting $z$ to a constant and is isometric to the Euclidean plane. However, these planes are known to be not totally geodesic. 

By setting $x$ or $y$ to a constant in the metric equation above, we get a metric on $\R^2$ that is isometric to the Poincar\'e metric on the upper half plane. One can check setting $x$ to a constant and then letting $u= y$ and $v = e^{z}$ gives the metric $\frac{du^2 + dv^2}{v^2}$ and sends $\R^2$ to the upper half plane. When $y$ is constant, then we use $u = x$ and $v = e^{-z}$ instead. In fact, these hyperbolic planes given by setting $x$ or $y$ are known to be exactly the totally geodesic subsurfaces of $\Sol$.

\p{Geodesics} The geodesics of $\Sol$ contained in a totally geodesic plane are easy to understand since each totally geodesic plane is essentially a stretched out version of the Poincar\'e model -- vertical lines are geodesics, and every other geodesic tends towards infinity in either the positive or negative $z$ direction depending on whether the plane has $x$ or $y$ constant. The generic geodesics of $\Sol$ are complicated, so we do not make use of them beyond knowing that they do not violate Proposition~\ref{prop: fundlemma}. In fact, a generic geodesic spirals around a ``cylinder" and moves monotonically along its axis, so the geodesic never returns close to a given basepoint (for more details, see the thesis of Grayson \cite{Grayson1983}).

From the above description of the geodesics, we have that Proposition~\ref{prop: fundlemma} holds for $\Sol$, so together with the known classification of totally geodesic subsurfaces (see \cite[Theorem 7.2]{Tsukada1996}), we have the following.

\begin{classsol}
The nontrivial totally geodesic subsets of $\Sol$ are exactly the hyperbolic planes where $x$ or $y$ is constant.
\end{classsol}

\p{Isometries} We give some background on the isometry group following Scott \cite{Scott}. The identity component of isometries of $\Sol$ is $\Sol$ itself, but there are eight components for the isometry group. Isometries in the other components come from composing an isometry in the identity component with an isometry fixing a point.

A point stabilizer group is isomorphic to the dihedral group of order eight, $D_4$. The isometries fixing the origin are the maps $(x, y, z) \mapsto (\pm x, \pm y, z)$ and $(x, y, z) \mapsto (\pm y , \pm x, -z)$. Note the first type of map fixes each hyperbolic plane, and the second type of map swaps the hyperbolic planes with the $z$ coordinate negated since the hyperbolic geodesics tend to negative infinity in one plane and positive infinity in the other.

\subsection{Classification of geodesic-preserving bijections} As a consequence of the classification of totally geodesic subsets, we have the following.

\begin{lemma} \label{solvert}
    A geodesic-preserving bijection $f: \Sol \rightarrow \Sol$ takes vertical lines to vertical lines.
\end{lemma}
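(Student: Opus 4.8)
The plan is to recover each vertical line as the intersection of two totally geodesic subsurfaces and then push this incidence structure through $f$. First I would record the elementary description of the hyperbolic planes: the plane $\{x = x_0\}$ and the plane $\{y = y_0\}$ are totally geodesic, and their intersection is precisely the vertical line $\{(x_0, y_0, z) : z \in \R\}$. A direct inspection of the two families shows that two distinct hyperbolic planes are disjoint when they lie in the same family (both $x$ constant, or both $y$ constant) and meet in a single vertical line when they lie in different families. Hence the vertical lines are exactly the nonempty intersections of pairs of hyperbolic planes.

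Next I would argue that $f$ carries each hyperbolic plane to a hyperbolic plane. Since a geodesic-preserving bijection restricts to a geodesic-preserving bijection between totally geodesic subsets, $f(P)$ is a totally geodesic subset for every hyperbolic plane $P$. It is nontrivial: it cannot equal $\Sol$, as $f$ is a bijection and $P \neq \Sol$; and it cannot be a single geodesic, since $P$ contains two geodesics meeting in exactly one point, and their $f$-images are two distinct geodesics meeting in exactly one point (using that $f$, being a bijection, sends the one-point intersection to a one-point intersection), so they cannot lie in a common geodesic. By the classification of totally geodesic subsets of $\Sol$, the nontrivial totally geodesic subset $f(P)$ is therefore again a hyperbolic plane.

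Finally I would combine the two steps. Writing a vertical line as $\ell = P_1 \cap P_2$ with $P_1 = \{x = x_0\}$ and $P_2 = \{y = y_0\}$, the bijectivity of $f$ gives $f(\ell) = f(P_1) \cap f(P_2)$, an intersection of two hyperbolic planes that is nonempty. By the first step this intersection is a vertical line, so $f$ takes the vertical line $\ell$ to a vertical line.

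The argument is short; its only delicate point is the second paragraph, where I must rule out the degenerate images (a single geodesic, or all of $\Sol$) before invoking the classification. I expect this, together with the clean verification that same-family planes are disjoint while different-family planes meet in exactly one vertical line, to be the step requiring the most care.
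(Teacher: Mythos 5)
Your proof is correct and follows essentially the same route as the paper: the paper's proof also writes each vertical line as the intersection of two hyperbolic planes, uses the classification of totally geodesic subsets to see that $f$ sends hyperbolic planes to hyperbolic planes, and concludes that vertical lines go to vertical lines. You simply fill in details the paper leaves implicit (ruling out the trivial images of a plane and checking that any nonempty intersection of two distinct hyperbolic planes is a vertical line), which is a sound elaboration rather than a different argument.
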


\begin{proof}
    Observe that each vertical line is the intersection of two orthogonal hyperbolic planes. Since these planes are exactly the totally geodesic subsets of $\Sol$, orthogonal hyperbolic planes are mapped by $f$ to orthogonal hyperbolic planes, and so vertical lines must be mapped to vertical lines.
\end{proof}

With Lemma \ref{solvert} and the Jeffers result classifying geodesic-preserving bijections on hyperbolic spaces \cite{Jeffers}, we give the proof of the Sol case.

\begin{proof} [Proof of Theorem \ref{sol}]
  Since the hyperbolic planes are the only nontrivial totally geodesic subsets, $f$ preserves the hyperbolic planes. By composing $f$ with an isometry of $\Sol$, we can assume $f$ fixes the origin and setwise fixes the hyperbolic planes, $H_1$ and $H_2$, through the origin. By the classification of geodesic-preserving bijections for $\Hyp^2$ in \cite{Jeffers}, $f$ acts as an isometry on each $H_i$. 
  
  Since $H_1 \cap H_2$ is a vertical geodesic $v$ through the origin, $f$ setwise fixes $v$.  Since $f|_{H_i}$ is an isometry fixing $v$ and the origin, we have four possibilities for $f|_{H_i}$ corresponding to the group of isometries generated by the reflection across $v$ and the reflection across the geodesic orthogonal to $v$ at the origin. By Lemma~\ref{solvert}, we must have that $f|_{H_i}$ sends vertical lines to vertical lines, and this is satisfied only by the identity or the reflection about $v$. 

  Now by composing with another isometry, we can assume each $f|_{H_i}$ is the identity. Observe $\Sol$ is covered by planes $\{O_\alpha\}$ orthogonal to $H_1$ (one of them being $H_2$), each intersecting it in a vertical geodesic $v_\alpha$, and so $f$ setwise fixes each $O_\alpha$. Thus, $f$ restricts to an isometry on each, and since the vertical geodesic is pointwise fixed, we have $f$ restricts to either the identity or the reflection about $v_\alpha$. 

   Finally, $f$ restricts to the same map on each $O_\alpha$, since otherwise any hyperbolic plane parallel to $H_1$ is not mapped to another hyperbolic plane. Since $f$  is the identity on $H_2$, we have that $f$ is globally the identity.
   \end{proof}
\bibliography{bib}
\bibliographystyle{plain}
\end{document}